\theoremstyle{plain}% default
\newtheorem{theorem}{Theorem}[section]
\newtheorem{maintheorem}{Theorem}
\newtheorem{lemma}[theorem]{Lemma}
\newtheorem{proposition}[theorem]{Proposition}
\theoremstyle{remark}
\newtheorem{definition}{Definition}
\newtheorem{remark}[theorem]{Remark}%[section]
\def\R{\ensuremath{\mathbb R}}
\def\N{\ensuremath{\mathbb N}}
\def\top{\operatorname{top}}
\def\dist{\operatorname{dist}}
\numberwithin{equation}{section}
\begin{document}
\title[A variational principle for impulsive semiflows]{A variational principle for impulsive semiflows}

\author[J. F. Alves]{Jos\'{e} F. Alves}
\address{Jos\'{e} F. Alves\\ Centro de Matem\'{a}tica da Universidade do Porto\\ Rua do Campo Alegre 687\\ 4169-007 Porto\\ Portugal}
\email{jfalves@fc.up.pt} \urladdr{http://www.fc.up.pt/cmup/jfalves}

\author[M. Carvalho]{Maria Carvalho}
\address{Maria Carvalho\\ Centro de Matem\'{a}tica da Universidade do Porto\\ Rua do
Campo Alegre 687\\ 4169-007 Porto\\ Portugal}
\email{mpcarval@fc.up.pt}

\author[C.H. V\'asquez]{Carlos H. V\'asquez}
\address{Carlos Vasquez\\ Instituto de Matem\'atica\\ Pontif\'icia Universidad Cat\'olica de Valpara\'iso\\
Valpara\'iso, Chile}
\email{carlos.vasquez@ucv.cl}

\date{\today}
\thanks{The authors were partially funded by the
European Regional Development Fund through the program COMPETE and by the Portuguese Government through the FCT - Funda{\c c}{\~a}o para a Ci{\^e}ncia e a Tecnologia under the project PEst-C/MAT/UI0144/2013.
JFA was also partially supported by Funda\c c\~ao Calouste Gulbenkian and the project PTDC/MAT/120346/2010.
CHV  was supported by the Center of Dynamical Systems and Related
Fields, c\'odigo ACT1103 PIA - Conicyt and Proyecto Fondecyt 1130547.}
\keywords{Entropy, Impulsive Dynamical System, Variational Principle}
\subjclass[2010]{37A05, 37A35}

\begin{abstract}
We consider impulsive semiflows defined on compact metric spaces and deduce a variational principle. In particular, we generalize the classical notion of topological entropy to our setting of discontinuous  semiflows.
\end{abstract}

\maketitle

\setcounter{tocdepth}{2}

\tableofcontents %\clearpage

\section{Introduction}

An impulsive semiflow  is built from a continuous semiflow $\varphi$ acting on a compact metric space $X$, which governs the state of the system between impulses, and a compact set $D \subset X$ where the semiflow $\varphi$ experiences some drift specified by a continuous function $I:D\to X$. Dynamical systems with impulse perturbations seem to be an efficient mathematical tool to describe real world phenomena that exhibit abrupt transitions in their phase space; see the introduction of \cite{AC14} and references therein for a thorough list of applications of impulsive semiflows, particularly in Life Sciences and Physics.

A major problem in this field is inherent to the dynamics: an impulsive semiflow is discontinuous. So, a first concern has been to understand the behavior on the non-wandering set, which may be non-invariant, and to find out sufficient conditions for the system to preserve a probability measure on the $\sigma$-algebra of the Borel sets. This issue has been successfully addressed in \cite{AC14}. The second main query in this context is the existence of probability measures suitable for specific investigations or applications. A natural way of selecting invariant measures is provided by the variational principle \cite{W82, BR75}, a relation between the topological and the measure-theoretic entropy which aims to find relevant extremal elements of the convex set of invariant probability measures.

However, the classical notion of topological entropy requires continuity of the dynamical system, a request we can no longer fulfill. Accordingly, in Section~\ref{new.topology} we introduce a generalized concept of entropy, which coincides with the classical one for continuous semiflows and is invariant under flow conjugacy. This new entropy concept turns out to be adequate to the kind of discontinuities under consideration and the right notion to establish a variational principle for impulsive semiflows.

\subsection{Topological entropy}\label{new.topology}
Here we recall the usual notion of topological entropy when $X$ is a compact metric space and $\varphi:\R^+_0 \times X\to X$ is a continuous semiflow and introduce a modified definition adapted to our setting.

\subsubsection{The classical definition}
Given  $x\in X$,  $T> 0$ and $\epsilon>0$ we define the \emph{dynamic ball}
$$B(x,\varphi, T,\epsilon)=\{ y\in X : \dist(\varphi_t (x),\varphi_t (y))<\epsilon, \text{ for every } t\in [0,T]\}.$$
The continuity of $\varphi$ implies that $B(x,\varphi, T,\epsilon)$ is an open set of $X$ since it if the open ball centered at $x$ of radius $\epsilon$ for the metric
$$\dist_T^\varphi(x,y)=\max_{0\leq t\leq T}\,\{\dist(\varphi_t(x), \varphi_t(y))\}.$$
A set $E\subseteq X$ is said to be $(\varphi, T, \epsilon)$-\emph{separated} if,
%for all $x,y\in E$ with $x\ne y$ there exists $t\in [0,T]$ such that
%$$\dist(\varphi_t (x),\varphi_t (y))\geq\epsilon.$$
%In terms of dynamics balls, a set $E\subseteq X$ is $(\varphi, T, \epsilon)$-separated if for every
for each $x\in E$, inside the ball $B(x,\varphi, T,\epsilon)$ there is no other point of $E$ besides $x$.
%$$y\notin B(x,\varphi, T,\epsilon), \quad \forall y\in E\setminus\{x\}.$$
As a consequence of the compactness of $X$ and the continuity of $\varphi$, any set $E\subseteq X$ which  is $(\varphi, T, \epsilon)$-separated is finite. If we denote by $|E|$ the cardinality of $E$, then we define the largest number of distinct, up to $\epsilon$, initial $T$-blocks of orbits of $\varphi$ by
$$s(\varphi, T,\epsilon)=\max\{|E| : E \text{ is } (\varphi, T, \epsilon)\text{-separated}\},$$
and the growth rate of this number as
$$h(\varphi,\epsilon)=\limsup_{T\to +\infty}\frac1T \log s(\varphi, T,\epsilon).$$
The \emph{topological entropy} of $\varphi$ is then given by
$$h_{\rm{top}}(\varphi)=\lim_{\epsilon\to 0^+}h(\varphi,\epsilon).$$

\subsubsection{A modified definition}
We now change the previous definition of topological entropy. Let $X$ be a metric space and $\psi:\R^+_0 \times X\to X$ a (not necessarily continuous) semiflow.

\begin{definition}\label{def.admissible} Consider a function $\tau$ assigning to each $x\in X$ a strictly increasing (possibly finite) sequence of positive real numbers
 $(\tau_n(x))_{n\in A(x)}$, where either $A(x)=\{1,\dots,\ell\}$ for some $\ell\in \N$ or $A(x)=\N$.
We say that $\tau$ is \emph{admissible with respect to $Z\subset X$}  if there exists $\eta>0$ such that $\tau_1(x)\geq \eta$ for all $x\in Z$, and for all $x\in X$:
\begin{enumerate}
\item $\tau_n(\psi_s(x))=\tau_n(x)-s$, for all $n\in \N$ and all $s\ge0$;
\item $\tau_{n+1}(x)-\tau_n(x)\geq\eta,$ for all $n\in \N$ with $n+1\in A(x)$.
\end{enumerate}

\end{definition}

For each admissible function $\tau$, $x\in X$, $T>0$ and $0<\delta<\eta/2$, we define
$$ J_{T,\delta}^\tau(x)= (0,T]\setminus\left(\bigcup_{j=1}^{n_T(x)}\,]\tau_j(x)-\delta,\tau_j(x)+\delta\,[\right),$$
where $n_T(x)=\max\{n\geq 1:\tau_n(x)\leq T\}$.
The \emph{$\tau$-dynamical ball} of radius $\epsilon>0$  centered at $x$ is the set
$$B^\tau(x,\psi, T,\epsilon,\delta)=\left\{y\in X:\dist(\psi_t (x),\psi_t (y))< \epsilon,\: \forall t\in J_{T,\delta}^\tau(x) \right\}.$$
%Observe that due to the dependence of $\mathcal T_{\delta,x}$ on the point $x$ there may exist
%$y\in B^*(x,\psi, T,\epsilon,\delta)$ for which $ x\notin B^*(y,\psi, T,\epsilon,\delta).$
Accordingly, a set $E\subseteq X$ is said to be $(\psi,\tau, T, \epsilon,\delta)$-\emph{separated} if, for each $x\in E$, we have
$$y\notin B^\tau(x,\psi, T,\epsilon,\delta), \quad\forall y\in E\setminus\{x\}.$$
As before, define %the number of modified orbits of $\psi$ as
$$
s^\tau(\psi, T,\epsilon,\delta)=\sup\{|E|:E \text{ is a finite } (\psi,\tau, T, \epsilon,\delta)\text{-separated set}\},$$
and the growth rate
$$h^\tau(\psi,\epsilon,\delta)= \limsup_{T\to +\infty}\frac1T \log s^\tau(\psi, T,\epsilon,\delta),$$
where $\log \infty =\infty$. As the function $\epsilon \mapsto h^\tau(\psi, \epsilon,\delta)$ is decreasing, the following limit exists
$$h^\tau(\psi,\delta)=
\lim_{\epsilon\to 0^+} h^\tau(\psi,\epsilon,\delta).$$
Finally, as the function $\delta \mapsto h^\tau(\psi,\delta)$ is also decreasing, we define the \emph{$\tau$-topological entropy} of $\psi$
$$h_{\rm{top}}^\tau(\psi)= \lim_{\delta\to 0^+}h^\tau(\psi,\delta).$$

\begin{maintheorem} \label{teo.coincide} Let $\varphi:\R^+_0\times X\to X$ be a continuous semiflow on a compact metric space~$X$ and $\tau$ an admissible function on $X$.  Then $h_{\rm{top}}^\tau(\varphi)=h_{\rm{top}}(\varphi).$
\end{maintheorem}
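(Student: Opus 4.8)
The plan is to prove the two inequalities $h_{\rm{top}}^\tau(\varphi)\le h_{\rm{top}}(\varphi)$ and $h_{\rm{top}}^\tau(\varphi)\ge h_{\rm{top}}(\varphi)$ separately, by comparing separated sets at matched scales. For the first (easy) inequality, observe that the constraint times $J_{T,\delta}^\tau(x)$ form a subset of $[0,T]$, so requiring closeness only on $J_{T,\delta}^\tau(x)$ is weaker than requiring it on all of $[0,T]$. Hence $B(x,\varphi,T,\epsilon)\subseteq B^\tau(x,\varphi,T,\epsilon,\delta)$ for every $x$, and consequently every $(\varphi,\tau,T,\epsilon,\delta)$-separated set is automatically $(\varphi,T,\epsilon)$-separated. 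This yields $s^\tau(\varphi,T,\epsilon,\delta)\le s(\varphi,T,\epsilon)$, and letting $T\to+\infty$, then $\epsilon\to0^+$, and finally $\delta\to0^+$ gives $h_{\rm{top}}^\tau(\varphi)\le h_{\rm{top}}(\varphi)$.

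The substantial direction is $h_{\rm{top}}^\tau(\varphi)\ge h_{\rm{top}}(\varphi)$, and the key point is that the excised windows around the times $\tau_j(x)$ each have the fixed length $2\delta$ and can be filled in using the uniform continuity of $\varphi$ on the compact set $[0,2\delta]\times X$, a modulus of continuity that does not degrade as $T\to+\infty$. Concretely, fix $\delta$ and $\epsilon>0$ and, by uniform continuity, choose $\epsilon'\in(0,\epsilon]$ so that $\dist(a,b)<\epsilon'$ forces $\dist(\varphi_s(a),\varphi_s(b))<\epsilon$ for all $s\in[0,2\delta]$. Since consecutive impulse times differ by at least $\eta>2\delta$, the removed intervals are pairwise disjoint and, for each $j$ with $\tau_j(x)-\delta>0$, the left endpoint $\tau_j(x)-\delta$ lies in $J_{T,\delta}^\tau(x)$. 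Thus if $y\in B^\tau(x,\varphi,T,\epsilon',\delta)$, anchoring at $\tau_j(x)-\delta$ and pushing forward by $s\in[0,2\delta]$ keeps $\varphi_t(x)$ and $\varphi_t(y)$ within $\epsilon$ throughout the gap $[\tau_j(x)-\delta,\tau_j(x)+\delta]$; together with the bound $\epsilon'\le\epsilon$ already holding on $J_{T,\delta}^\tau(x)$, this gives closeness at scale $\epsilon$ on all of $[0,T]$, i.e.\ $y\in B(x,\varphi,T,\epsilon)$.

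The one window not anchored on its left is the initial one, when $\tau_1(x)<\delta$; I would treat it by anchoring at $t=0$, which requires the extra control $\dist(x,y)<\epsilon'$. To supply this, cover $X$ by finitely many balls of radius $\epsilon'/2$, say $N=N(\epsilon')$ of them, and, given a maximal $(\varphi,T,\epsilon)$-separated set $E$, pass to a subset $E'\subseteq E$ whose starting points lie in a common ball, so that $|E'|\ge|E|/N$ and any two points of $E'$ start within $\epsilon'$. The previous paragraph then shows $E'$ is $(\varphi,\tau,T,\epsilon',\delta)$-separated, whence $s^\tau(\varphi,T,\epsilon',\delta)\ge s(\varphi,T,\epsilon)/N$. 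Because $\tfrac1T\log N\to0$, letting $T\to+\infty$ gives $h^\tau(\varphi,\epsilon',\delta)\ge h(\varphi,\epsilon)$, and then $\epsilon\to0^+$ (which forces $\epsilon'\to0^+$) followed by $\delta\to0^+$ yields $h_{\rm{top}}^\tau(\varphi)\ge h_{\rm{top}}(\varphi)$.

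The main obstacle is exactly this uniformity: ensuring the gap-filling modulus $\epsilon'$ depends only on $\epsilon$ and $\delta$ (not on $T$ or on the particular point), and correctly handling the anchor-less initial window through the harmless multiplicative constant $N$. Once these are in place, the two inequalities combine to the claimed equality $h_{\rm{top}}^\tau(\varphi)=h_{\rm{top}}(\varphi)$.
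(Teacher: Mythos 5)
Your proposal is correct, and the easy inequality is handled exactly as in the paper (via the inclusion $B(x,\varphi,T,\epsilon)\subseteq B^\tau(x,\varphi,T,\epsilon,\delta)$). For the hard inequality, however, you take a genuinely different route. The paper first proves a lemma asserting uniform equicontinuity of the orbit maps $t\mapsto\varphi_t(z)$ over all $z\in X$, and then argues \emph{in the time variable}: if $\dist(\varphi_t(x),\varphi_t(y))\geq\alpha$ at some $t\in[0,T]$, the triangle inequality shows separation at scale $\alpha/2$ persists for all $u$ within $\beta(\alpha)$ of $t$, and choosing $\delta<\beta/2$ guarantees such a $u$ can be found inside $J^\tau_{T,\delta}(x)$; thus the very same $(\varphi,T,\alpha)$-separated set is $(\varphi,\tau,T,\epsilon,\delta)$-separated with no loss of cardinality. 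You instead argue \emph{in the space variable}: uniform continuity of $\varphi$ on the compact set $[0,2\delta]\times X$ lets you propagate $\epsilon'$-closeness forward from the left endpoint of each excised window, yielding (together with closeness at the anchor $t=0$) an inclusion of the $\tau$-dynamical ball into the classical one, at the cost of a pigeonhole argument over a cover by $N(\epsilon')$ balls that contributes only a vanishing $\tfrac1T\log N$. Each approach has a small advantage: yours keeps $\delta$ a free parameter throughout (the auxiliary scale $\epsilon'$ depends on $\epsilon$ and the fixed $\delta$, so the order of limits $\epsilon\to0^+$ then $\delta\to0^+$ matches the definition of $h^\tau_{\rm top}$ with no coupling), whereas the paper's argument couples $\delta$ to the spatial scale $\alpha$ and must note at the end that $\delta(\alpha)\to0^+$; on the other hand, the paper's time-perturbation avoids both the covering/pigeonhole step and the separate treatment of the anchor-less initial window. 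When writing your argument up you should make explicit that the right half of the last window may be truncated at $T$ (harmless, since $[\tau_j(x)-\delta,T]\subseteq[\tau_j(x)-\delta,\tau_j(x)+\delta]$) and that closeness at $t=0$ itself is supplied by the grouping, not by $J^\tau_{T,\delta}(x)$, which is contained in $(0,T]$.
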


\subsection{Impulsive semiflows.}

Consider a compact metric space $X$, a continuous semiflow $\varphi:\R^+_0 \times X\to X$, a nonempty compact  set $D\subset X$ and a continuous map $I:D \to X$. Given $\xi>0$, let
\begin{equation}\label{eq.dxi}
D_\xi=\bigcup_{x\in D }\{\varphi_t(x):  0<t<\xi\}.
\end{equation}

\begin{definition}\label{def.tubecondition}
We say that $D$ satisfies a \emph{half-tube condition} if there is $\xi_0>0$ such that:
\begin{enumerate}
\item $D_\xi$ is an open set for each $0<\xi\le\xi_0$;
\item If $\varphi_t(x)\in D_{\xi_0}$ for some  $x\in X$ and $t>0$, then there is $0\le t'<t$ with $\varphi_{t'}(x)\in D$;
\item  $\{\varphi_t(x_1):  0<t<\xi_0\}\cap \{\varphi_t(x_2):  0<t<\xi_0\}=\emptyset$ for all $x_1,x_2\in D$ with $x_1\neq x_2$.
\end{enumerate}
\end{definition}

The first visit of each $\varphi$-trajectory to $D$ will be registered by the function $\tau_1:X\to~[0,+\infty]$ defined by
$$
\tau_1(x)=
\begin{dcases}
\inf\left\{t> 0:\varphi_t(x)\in D\right\} ,& \text{if } \varphi_t(x)\in D\text{ for some }t>0;\\
+\infty, & \text{otherwise.}
\end{dcases}
$$
It is known that the function $\tau_1$ is  lower semicontinuous on the set $X\setminus D$; see \cite[Theorem~2.7]{C04a}. Additionally, the \emph{tube condition} proposed in \cite{C04a} ensures that the restriction of $\tau_1$ to $X\setminus D$ is also upper semicontinuous. %A NOSSA HALF-TUBE CONDITION N�O CHEGA.

Assuming $\tau_1(x)>0$ for all $x\in X$, we define the \emph{impulsive trajectory} $\gamma_x:[0,T(x)[\,\to X$  and the subsequent \emph{impulsive times} of $x\in X$ according to the following rules:
\begin{enumerate}
\item If $0\le t<\tau_{1}(x)$, then we set  $\gamma_x(t)=\varphi_t(x)$.
\item If $\tau_1(x)<\infty$, then we proceed inductively:
\begin{enumerate}
\item Firstly we set
  $$\gamma_x(\tau_1(x))=I(\varphi_{\tau_1(x)}(x)).$$
  Defining the second impulsive time of $x$ as
  $$\tau_2(x)=\tau_1(x)+\tau_1(\gamma_x(\tau_1(x))),$$
 we set
         $$\gamma_x(t)=\varphi_{t-\tau_1(x)}(\gamma_x(\tau_1(x))),\quad\text{for }\tau_1(x)<t<\tau_2(x).$$
         \item Assuming that $\gamma_x(t)$ is defined for $t<\tau_{n}(x)$, for some $n\ge 2$, we set
  $$\gamma_x(\tau_{n}(x))=I(\varphi_{\tau_n(x)-\tau_{n-1}(x)}(\gamma_x({\tau_{n-1}(x)}))).$$
  Defining the $(n+1)^{\text{th}}$ impulsive time of $x$ as
  $$\tau_{n+1}(x)=\tau_n(x)+\tau_1(\gamma_x(\tau_n(x))),$$
  we set
         $$\gamma_x(t)=\varphi_{t-\tau_n(x)}(\gamma_x(\tau_n(x))),\quad\text{for }\tau_n(x)<t<\tau_{n+1}(x).$$
\end{enumerate}
Finally, we define the time length of the trajectory of $x$ as
 $$T(x)=\sup_{n\ge 1}\,\{\tau_n(x)\}.
 $$
\end{enumerate}
%\begin{remark}\label{re.infty} Under the fairly reasonable condition $I(D)\cap (D)=\emptyset$, for instance, we have  $T(x)=\infty$ for all $x\in X$. Indeed, it has been proved in \cite[Theorem~2.7]{C04a} that $\tau_1$ is always lower semicontinuous on the set $X\setminus D$. As $D$ is compact and $I$ is continuous, then $I(D)$ is compact. Supposing that $I(D) \cap D=\emptyset$, then by the lower semicontinuity of $\tau_1$ on $X\setminus D$  there must be some $\alpha>0$ such that $\tau_1(x)>\alpha$ for all $x\in I(D).$ This clearly implies that $T(x)=\infty$ for all $x\in X$.
%\end{remark}
We say that $(X,\varphi,D,I)$ is an \emph{impulsive dynamical system} if
 $$\tau_1(x)>0\quad\text{and}\quad T(x)=+\infty, \quad\text{for all $x\in X$}.$$
As observed in \cite[Remark 1.1]{AC14}, under the condition $I(D)\cap (D)=\emptyset$ we have  $T(x)=\infty$ for all $x\in X$ and $\tau=\{\tau_n(x)\}_{n\geq 1}$ is an admissible function with respect to $D$.
The  \emph{impulsive semiflow} $\psi$ of an impulsive dynamical system $(X,\varphi, D, I)$ is defined by
$$
\begin{array}{cccc}
        \psi:  &  \mathbb{R}^+_0 \times X & \longrightarrow &X \\
        & (t,x) & \longmapsto & \gamma_x(t), \\
        \end{array}$$
where $\gamma_x(t)$ is the impulsive trajectory of $x$ determined by $(X,\varphi,D, I)$.
It has been  proved in  \cite[Proposition 2.1]{B07} that $\psi$ is indeed a semiflow, though not necessarily continuous.

For small enough $\xi>0$ we define
$$X_\xi=X\setminus (D_\xi\cup D).$$
Observe that, as we are assuming that $D$ satisfies a half-tube condition (see item (2) of Definition~\ref{def.tubecondition}), then  $X_\xi$ is forward invariant under $\psi$ (that is, $\psi_t(X_\xi) \subseteq X_\xi$ for all $t \geq 0$).

To control the moments a $\varphi$-trajectory visits $D$, we introduce the function
$$\tau^*:X_\xi \cup D \to [0,+\infty]$$
defined by
$$
\tau^*(x)=
\begin{cases}
\tau_1(x), &\text{if $x\in X_\xi$};\\
0, &\text{if $x\in D$}
\end{cases}
$$
and, in what follows, we will assume that $\tau^*$ is a continuous map.

\begin{definition}\label{def.transverse}
We say that $I(D)$ is \emph{transverse} if there are $s_0>0$ and $\xi_0>0$ such that
\begin{enumerate}
\item $ \varphi_t(x) \in I(D) \,\Rightarrow \,\varphi_{t+s}(x) \notin I(D),\,\,\forall\,0 < s < s_0$;
\item  $\{\varphi_t(x_1):  0<t<\xi_0\}\cap \{\varphi_t(x_2):  0<t<\xi_0\}=\emptyset$ for all $x_1,x_2\in I(D)$ with $x_1\neq x_2$.
\end{enumerate}
\end{definition}
\noindent This property holds, for instance, when $\varphi$ is a $C^1$ semiflow and $I(D)$ is transversal to the flow direction.

The map $I$ is said to be \emph{1-Lipschitz} if for all $x, y \in D$ we have
$$\dist\,(I(x),I(y))\leq \,\dist\, (x,y).$$

\begin{maintheorem}\label{teo.modelocontinuo}
Let $\psi$ be the semiflow of  an impulsive dynamical system $(X,\varphi, D, I)$ such that $I$ is 1-Lipschitz, $I(D)\cap D=\emptyset$, $D$ satisfies a half-tube condition, $I(D)$ is transverse and $\tau^*$ is continuous. Then there exist a compact metric space $\widetilde X$, a continuous semiflow $\tilde\psi$ in $\widetilde X$ and a continuous invertible bimeasurable map $h:X_\xi \to \widetilde X$ such that
$\tilde\psi_t\circ h = h\circ \psi_t$ for all $t\ge 0$ and
$$h^\tau_{\rm{top}}(\psi)=h_{\rm{top}}(\tilde\psi).$$
\end{maintheorem}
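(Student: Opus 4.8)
The plan is to realize the discontinuous semiflow $\psi$ as the image, under a quotient map, of a genuinely continuous semiflow, by ``healing'' the impulse jump. First I would construct $\widetilde X$ as a quotient of a compact set $Y\subseteq X$ containing $X_\xi$ (the closure of $X_\xi$ together with $D$ and $I(D)$), under the equivalence relation generated by $p\sim I(p)$ for $p\in D$; I set $\widetilde X=Y/\!\sim$ and let $h\colon X_\xi\to\widetilde X$ be the restriction to $X_\xi$ of the canonical projection. The semiflow $\tilde\psi$ is obtained by transporting $\psi$ through $h$: an orbit flows by $\varphi$ and, on reaching a glued point, continues from the identified image, so that it now passes continuously through that point. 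The hypothesis $I(D)\cap D=\emptyset$ guarantees that the equivalence classes do not chain (each is of the form $\{p\}\cup\{q\in D: I(q)=p\}$) and contains at most one point of $X_\xi$, since $X_\xi\cap D=\emptyset$; hence $h$ is injective, i.e. invertible onto its image, and the intertwining $\tilde\psi_t\circ h=h\circ\psi_t$ holds by construction.

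The next step is to equip $\widetilde X$ with a metric $\tilde d$ turning it into a compact metric space on which $\tilde\psi$ is continuous, and this is where all five hypotheses enter. The $1$-Lipschitz property of $I$ is what lets me define a bona fide metric on the quotient, as an infimum over chains of distances through the identifications, that induces the quotient topology and does not collapse distinct orbits; compactness of $\widetilde X$ then follows from compactness of $Y$ and closedness of the gluing relation. The half-tube condition (Definition~\ref{def.tubecondition}) and the transversality of $I(D)$ (Definition~\ref{def.transverse}) provide local flow-box coordinates on the ``exit'' side near $D$ and on the ``entrance'' side near $I(D)$, so that across the gluing the flow looks like a product and the identification is carried out along a clean cross-section; combined with the continuity of $\tau^*$, which makes the time needed to reach the gluing vary continuously, this yields continuity of $\tilde\psi$ and of $h$, together with the bimeasurability of $h$.

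For the entropy identity I would transport the admissible function $\tau$ on $X$ to a function $\tilde\tau$ on $\widetilde X$ by $\tilde\tau_n(h(x))=\tau_n(x)$; the relations of Definition~\ref{def.admissible} are conjugacy-invariant, so $\tilde\tau$ is admissible for the continuous semiflow $\tilde\psi$, and Theorem~\ref{teo.coincide} gives $h^{\tilde\tau}_{\rm{top}}(\tilde\psi)=h_{\rm{top}}(\tilde\psi)$. It therefore suffices to prove $h^\tau_{\rm{top}}(\psi)=h^{\tilde\tau}_{\rm{top}}(\tilde\psi)$, i.e. that the $\tau$-topological entropy is preserved under the merely bimeasurable conjugacy $h$. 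This is the step where the excision in the definition of the $\tau$-dynamical ball is indispensable: on the set $J^\tau_{T,\delta}(x)$, obtained by deleting the $\delta$-neighbourhoods $\,]\tau_j(x)-\delta,\tau_j(x)+\delta[\,$ of the impulse instants, which are exactly the times at which $h^{-1}$ fails to be continuous, the conjugacy $h$ is uniformly bi-continuous along orbits. I would use this to show that $h$ carries $(\psi,\tau,T,\epsilon,\delta)$-separated sets into $(\tilde\psi,\tilde\tau,T,\epsilon',\delta)$-separated sets and conversely, with a modulus uniform in $T$, and then pass to the growth rates $\limsup_T\frac1T\log(\cdot)$ and to the limits $\epsilon\to0^+$ and $\delta\to0^+$ to conclude.

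The main obstacle is the construction of the metric model and, above all, the continuity of $\tilde\psi$ across the gluing: verifying that $\tilde d$ is a genuine metric inducing the quotient topology, that $(\widetilde X,\tilde d)$ is compact, and that the induced semiflow is continuous is where the $1$-Lipschitz, half-tube, transversality and $\tau^*$-continuity hypotheses are all genuinely used and must be combined with care, the $1$-Lipschitz condition being what prevents the identifications from distorting distances. The second delicate point is quantitative: to match the exponential growth rates one needs the bi-continuity of $h$ on the excised-time sets $J^\tau_{T,\delta}(x)$ to be uniform in $T$, so that the \emph{counts} of separated sets, not merely their finiteness, are comparable in the limit. Establishing that the $\delta$-neighbourhoods of the impulse times account exactly for the locus where $h$ is not bi-continuous, so that no entropy is created or destroyed as $\delta\to0^+$, is the technical heart of the estimate.
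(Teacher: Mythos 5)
Your overall architecture is the same as the paper's: quotient $X$ by the relation generated by $p\sim I(p)$, metrize the quotient by an infimum over chains and use the $1$-Lipschitz hypothesis to bound chain length (the paper's Lemma~\ref{le.chain}), prove continuity of the induced semiflow from the continuity of $\tau^*$ and $I(D)\cap D=\emptyset$, transport the admissible function, and invoke Theorem~\ref{teo.coincide}. But the quantitative step you call the ``technical heart'' contains a genuine error: it is \emph{not} true that the $\delta$-neighbourhoods of the impulse times $\tau_j(x)$ account exactly for the locus where $h$ fails to be bi-continuous along orbits. The map $h=\pi|_{X_\xi}$ is uniformly continuous, but its inverse is badly behaved at $\pi(I(D))=\pi(D)$: for $u\in X_\xi\cap I(D)$ the class of $u$ contains points of $D$, so $\tilde d(\pi(u),\pi(v))$ can be small while $d(u,v)$ is of the order of $\dist(D,I(D))$, namely when $v$ is metrically close to $I^{-1}(\{u\})\subset D$. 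An orbit $\psi_t(x)$ passes through $I(D)$ not only just after its impulse times but also at the non-impulsive visit times $\theta_n(x)$ at which the continuous flow $\varphi$ simply crosses $I(D)$; these times are not excised from $J^{\tau}_{T,\delta}(x)$. Consequently a witness time $t\in J^{\tau}_{T,\delta}(x)$ with $d(\psi_t(x),\psi_t(y))\ge\epsilon$ can have $\tilde d(\tilde\psi_t(\tilde x),\tilde\psi_t(\tilde y))$ arbitrarily small, and your claim that $h$ carries $(\psi,\tau,T,\epsilon,\delta)$-separated sets to separated sets downstairs breaks at exactly these times.

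This is why the paper refines $\tau$ to an admissible function $\tau'$ that also excises neighbourhoods of the visit times to $I(D)$ --- this is where the transversality of $I(D)$ is actually used, to make $\theta$ (and hence $\tau'$) admissible, not to produce flow-box coordinates --- and then proves the nontrivial Lemma~\ref{le.equal1} that $h^{\tau'}_{\rm{top}}(\psi)=h^{\tau}_{\rm{top}}(\psi)$ by showing that a witness time falling in a forbidden window around some $\theta_j$ can be replaced by the endpoint $\theta_j-\delta$ at the cost of halving $\epsilon$. The semiconjugacy argument is then run only on $X_\xi\setminus I(D)$, where classes are singletons and $g^{-1}$ is uniformly continuous (Lemma~\ref{le.proj} and Remark~\ref{re.setminus}). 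A second, smaller omission: your $h$ is defined only on $X_\xi$, while $h^{\tau}_{\rm{top}}(\psi)$ counts separated sets in all of $X$, including $D\cup D_\xi$; the paper needs Lemma~\ref{le.equal2} (pushing points of $D\cup D_\xi$ forward by time $\xi$, at the cost of a factor $2$ in the counts) to justify the restriction. Without the $\tau'$ refinement and the space-restriction lemma, your plan does not close.
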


We are left to relate the topological entropy of $\tilde\psi$ with the metric entropies of the time-one map $\psi_1$ induced by the impulsive semiflow $\psi$. In the sequel, $\mathcal{M}_{\psi}(X)$ will stand for the set of probability measures defined on the $\sigma$-algebra of the Borel subsets of $X$ and invariant by the impulsive semiflow associated to the impulsive dynamical system $(X,\varphi,D,I)$.

\begin{maintheorem}\label{teo.var.principle} Let $\psi$ be the semiflow of  an impulsive dynamical system $(X,\varphi, D, I)$  satisfying the assumptions of Theorem~\ref{teo.modelocontinuo} and such that $\mathcal{M}_{\psi}(X)\neq \emptyset$.
% $I(\Omega_\psi\cap D) \subset \Omega_\psi \setminus D$.
Then
$$h^\tau_{\rm{top}}(\psi)=\sup\,\{h_\mu(\psi_1):\,\mu \in \mathcal{M}_{\psi}(X)\}.$$
%where $\psi_1$ is the time-1 map induced by the impulsive semiflow $\psi$.
\end{maintheorem}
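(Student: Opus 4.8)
The plan is to transport everything through the conjugacy furnished by Theorem~\ref{teo.modelocontinuo} and then invoke the classical variational principle on the continuous model $\tilde\psi$. By Theorem~\ref{teo.modelocontinuo} we already have $h^\tau_{\rm{top}}(\psi)=h_{\rm{top}}(\tilde\psi)$, so it suffices to prove that
$$h_{\rm{top}}(\tilde\psi)=\sup\{h_\mu(\psi_1):\mu\in\mathcal{M}_{\psi}(X)\}.$$
Since $\widetilde X$ is compact and $\tilde\psi$ is continuous, the time-one map $\tilde\psi_1$ is a continuous map of a compact metric space and $h_{\rm{top}}(\tilde\psi)=h_{\rm{top}}(\tilde\psi_1)$, the topological entropy of a continuous flow coinciding with that of its time-one map. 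The classical variational principle \cite{W82} then gives $h_{\rm{top}}(\tilde\psi_1)=\sup\{h_{\tilde\nu}(\tilde\psi_1):\tilde\nu\in\mathcal{M}_{\tilde\psi_1}(\widetilde X)\}$, where the supremum runs over all $\tilde\psi_1$-invariant Borel probabilities.

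The first step I would carry out is to reduce this last supremum to one over flow-invariant measures. Given a $\tilde\psi_1$-invariant measure $\tilde\nu$, the orbit average $\bar\nu=\int_0^1(\tilde\psi_s)_*\tilde\nu\,ds$ is $\tilde\psi$-invariant, because $s\mapsto(\tilde\psi_s)_*\tilde\nu$ is $1$-periodic owing to $(\tilde\psi_1)_*\tilde\nu=\tilde\nu$. Using that each $(\tilde\psi_s)_*\tilde\nu$ is again $\tilde\psi_1$-invariant with $h_{(\tilde\psi_s)_*\tilde\nu}(\tilde\psi_1)=h_{\tilde\nu}(\tilde\psi_1)$, since $\tilde\psi_s$ commutes with $\tilde\psi_1$ and conjugacy preserves entropy, together with the affinity of the map $\nu\mapsto h_\nu(\tilde\psi_1)$ on the simplex of invariant measures, I obtain $h_{\bar\nu}(\tilde\psi_1)=h_{\tilde\nu}(\tilde\psi_1)$. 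As flow-invariant measures are in particular $\tilde\psi_1$-invariant, this yields
$$h_{\rm{top}}(\tilde\psi)=\sup\{h_{\tilde\mu}(\tilde\psi_1):\tilde\mu\in\mathcal{M}_{\tilde\psi}(\widetilde X)\}.$$

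The second, and decisive, step is to match $\mathcal{M}_{\tilde\psi}(\widetilde X)$ with $\mathcal{M}_{\psi}(X)$ through $h$ in an entropy-preserving way. Here I would first establish that every $\mu\in\mathcal{M}_{\psi}(X)$ satisfies $\mu(D_\xi\cup D)=0$, so that $\mu$ is concentrated on the forward-invariant set $X_\xi$ on which $h$ is defined; this relies on the fact that trajectories meet $D_\xi\cup D$ only in bounded time windows and never return once they enter $X_\xi$, forcing the transient set to be $\mu$-null (cf. \cite{AC14}). Since $h:X_\xi\to\widetilde X$ is a bimeasurable bijection with $\tilde\psi_t\circ h=h\circ\psi_t$, the push-forward $\mu\mapsto h_*\mu$ is a bijection between $\mathcal{M}_{\psi}(X)$ and $\mathcal{M}_{\tilde\psi}(\widetilde X)$ that intertwines $\psi_1$ with $\tilde\psi_1$. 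Because the Kolmogorov--Sinai entropy is invariant under measurable isomorphisms, $h_{\mu}(\psi_1)=h_{h_*\mu}(\tilde\psi_1)$, and taking suprema closes the chain of equalities.

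The hard part will be precisely the measure-theoretic matching of the second step: verifying that $h_*$ is a genuine bijection between the two sets of invariant measures, in particular its surjectivity onto $\mathcal{M}_{\tilde\psi}(\widetilde X)$ and the full-measure property $\mu(X_\xi)=1$, given that $h$ is merely continuous with bimeasurable---rather than continuous---inverse. Once the null set $D_\xi\cup D$ is discarded, the conjugacy becomes a bona fide measurable isomorphism and the entropy bookkeeping is routine; the delicate point is confirming that no invariant mass hides on the discontinuity set $D$ or inside the tube $D_\xi$, which is exactly where the half-tube and transversality hypotheses, together with the recurrence analysis of \cite{AC14}, must be brought to bear.
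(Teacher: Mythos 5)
Your proposal follows essentially the same route as the paper: Theorem~\ref{teo.modelocontinuo} reduces the claim to the continuous model $\tilde\psi$, the classical variational principle is applied there, and the supremum is transported back through the bimeasurable conjugacy $\pi|_{X_\xi}$ after showing that every $\psi$-invariant measure gives zero mass to $D\cup D_\xi$ (the paper's Lemmas~\ref{le.naoerrante} and~\ref{eq.conjpush}). The only divergence is that you spell out the reduction from $\tilde\psi_1$-invariant to flow-invariant measures by orbit-averaging, a step the paper subsumes in its citation of \cite{BR75,W82}.
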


Regarding the additional demand in the statement of the previous theorem, we recall that \cite[Theorem A]{AC14} shows that conditions $I(D)\cap D=\emptyset$ and $I(\Omega_\psi\cap D) \subset \Omega_\psi \setminus D$ together are sufficient for $\mathcal M_\psi(X)$ to be nonempty, where $\Omega_\psi$ denotes the non-wandering set of $\psi$.

In the last section, we will present a simple example satisfying the assumptions of our theorems. These results also apply, for instance, to the discontinuous local semiflows for Kurzweil equations studied in~\cite{ABFS11}.

%\subsection{Overview} In Section~\ref{new.topology} we present a generalization of the classical topological entropy, discussing some of its properties in the subsequent sections. After recalling the definition of impulsive semiflow, in Section~\ref{se.timespace} we .............. In particular, we prove Theorem~\ref{}. In Section~\ref{} we use the impulsive function to define an equivalence relation and thus obtain the space $\widetilde{X}$ of Theorem~\ref{} as the projection of ----------- on the quotient space determined by that equivalence relation. Afterwards, to restore the Variational Principle regarding the new entropy concept for those impulsive semiflows which preserve a probability measure, we perform a conjugacy between (an appropriate restriction of) the impulsive semiflow and a continuous semiflow defined on a (metric compact) quotient structure, to which we applied the classical theory. This is the content of Section~\ref{sec.quotient}. We conclude the proofs of Theorem~\ref{} and Theorem~\ref{} in Section~\ref{}.

\section{Topological entropy: classical and new}

In this section we will verify that the modified definition of topological entropy coincides with the classical one for continuous semiflows defined on compact metric spaces. We start proving that the trajectory of any point is uniformly continuous.

\begin{lemma}\label{le.alfabeta} Let $\varphi:\R^+_0\times X\to X$ be a continuous semiflow on a compact metric space $X$.
For each $\alpha>0$ there exists $\beta>0$ such that, for all $x\in X$ and all $t,u\geq 0$ with $|t-u|<\beta$, we have
$\dist(\varphi_t(x),\varphi_u(x))<\alpha.$
\end{lemma}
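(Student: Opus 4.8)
The plan is to prove uniform continuity of the map $(t,x) \mapsto \varphi_t(x)$ in the time variable, uniformly over $x \in X$. The key obstacle is precisely this uniformity: for a fixed $x$, continuity of $t \mapsto \varphi_t(x)$ is immediate from the continuity of the semiflow, but we need a single $\beta$ that works for all $x$ simultaneously. I would overcome this by a compactness argument, but a naive application runs into trouble because time ranges over the unbounded set $\R^+_0$; the correct idea is to use the compactness of $X$ together with a local uniform continuity estimate and then propagate it along the semiflow using the semigroup property.

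Let me describe the approach concretely. Given $\alpha > 0$, I would first find a uniform modulus of continuity near time zero. Since $\varphi : \R^+_0 \times X \to X$ is continuous and $\{0\} \times X$ is compact, for each $x \in X$ there are a neighborhood $U_x$ of $x$ and a $\beta_x > 0$ such that $\dist(\varphi_s(y), y) < \alpha$ whenever $y \in U_x$ and $0 \le s < \beta_x$. Covering the compact space $X$ by finitely many such neighborhoods $U_{x_1}, \dots, U_{x_k}$ and setting $\beta = \min_i \beta_{x_i}$, I would obtain a single $\beta > 0$ with the property that $\dist(\varphi_s(y), y) < \alpha$ for every $y \in X$ and every $s$ with $0 \le s < \beta$. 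This is the crucial uniform estimate, and compactness of $X$ is exactly what makes it available with a common $\beta$.

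The final step is to reduce the general case $|t - u| < \beta$ to this near-zero estimate via the semigroup property. Without loss of generality assume $u \le t$, write $t = u + s$ with $0 \le s < \beta$, and set $y = \varphi_u(x)$. Then
$$\dist(\varphi_t(x), \varphi_u(x)) = \dist(\varphi_s(\varphi_u(x)), \varphi_u(x)) = \dist(\varphi_s(y), y) < \alpha,$$
using $\varphi_t = \varphi_{u+s} = \varphi_s \circ \varphi_u$ and the fact that $y \in X$, so the uniform estimate applies. This holds for all $x \in X$ and all $t, u \ge 0$ with $|t - u| < \beta$, as required.

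I expect the main subtlety to be in justifying the uniform near-zero estimate rather than in the semigroup reduction, which is routine. One must be slightly careful that the neighborhoods and the $\beta_x$ are extracted from continuity at points of the form $(0, x)$ and that the finite subcover yields a positive minimum $\beta$; the latter is guaranteed precisely because the cover is finite, which is where compactness of $X$ enters decisively.
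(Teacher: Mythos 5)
Your proof is correct. It follows the same broad strategy as the paper --- compactness of $X$ together with the semigroup property, used to reduce the unbounded time range to a bounded window --- but the reduction itself is genuinely different. The paper invokes uniform continuity of $\varphi$ on the compact product $[0,1]\times X$ and writes $t=n_0+t_0$, $u=n_0+u_0$ with $t_0,u_0\in[0,1]$, comparing $\varphi_{t_0}$ and $\varphi_{u_0}$ at the single point $y=\varphi_{n_0}(x)$. You instead establish only equicontinuity at time zero, namely a single $\beta>0$ with $\dist(\varphi_s(y),y)<\alpha$ for all $y\in X$ and $0\le s<\beta$, obtained from a finite subcover of $X$, and then apply it at $y=\varphi_u(x)$ with $s=t-u$. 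Your version buys a cleaner final step: because the base point is taken at time $u$ exactly, only the increment $s\in[0,\beta)$ matters, and you avoid the bookkeeping needed to place both $t_0$ and $u_0$ in the same unit interval (which, as written in the paper with $\beta\le 1/2$, in fact requires a word of care when $u$ lies just below an integer and $t$ just above it). The one point you gloss over is the local estimate itself: continuity of $\varphi$ at $(0,x)$ directly controls $\dist(\varphi_s(y),x)$ for $(s,y)$ near $(0,x)$, and one must pass through $x$ by the triangle inequality (working with $\alpha/2$) to obtain $\dist(\varphi_s(y),y)<\alpha$; this is routine and does not affect correctness.
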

\begin{proof}

First notice that, as $X$ is a compact metric space, then $\varphi:[0,1]\times X\to X$ is uniformly continuous. In particular, given $\alpha>0$, there exist $\beta_0> 0$ such that for all $y\in X$  and all $t_0,u_0\in[0,1]$ with $|t_0-u_0|<\beta_0$ we have
$$\dist(\varphi_{t_0}(y),\varphi_{u_0}(y))<\alpha.$$
Take $\beta=\min\{\beta_0,1/2\}>0$. For every $t,u>0$ with $|t-u|<\beta$,  there exist an integer $n_0\geq 0$ and  $t_0,u_0\in[0,1]$ such that $t=n_0+t_0$, $u=n_0+u_0$ and $|t_0-u_0|=|t-u|<\beta\le \beta_0$. Taking $y=\varphi_{n_0}(x)$,  then
\begin{eqnarray*}
\dist(\varphi_{t}(x),\varphi_u(x))&=&\dist(\varphi_{n_0+t_0}(x),\varphi_{n_0+u_0}(x))\\
&=&\dist(\varphi_{t_0}(\varphi_{n_0}(x)),\varphi_{u_0}(\varphi_{n_0}(x)))\\
&=&\dist(\varphi_{t_0}(y),\varphi_{u_0}(y))\\
&<&\alpha.
\end{eqnarray*}
\end{proof}

\subsection{Proof of Theorem~\ref{teo.coincide}} Fix $0<\delta<\eta/2$, $\epsilon> 0$ and  $T>0$. Notice that for every $x\in X$ we have
$$B(x,\varphi, T,\epsilon)\subseteq B^\tau(x,\varphi, T,\epsilon,\delta),$$
and
$$s^\tau(\varphi, T,\epsilon,\delta)\leq s(\varphi, T,\epsilon), $$
so
$$h_{\rm{top}}^\tau(\varphi)\leq h_{\rm{top}}(\varphi).$$

Let us now prove the other inequality. Fix $T\geq 0$ and $\alpha>0$. By Lemma~\ref{le.alfabeta} there exists $\beta>0$
such that, for all $z\in X$ and all $t,u>0$ with $|t-u|<\beta$, we have
\begin{equation}\label{eq.phiz}
 \dist(\varphi_t(z),\varphi_u(z))<\alpha/4.
\end{equation}
Hence,  if $x,y\in X$ and $t\in[0,T]$ satisfy
\begin{equation}\label{eq.phit}
\dist(\varphi_t(x),\varphi_t(y))>\alpha,
\end{equation}
then, for every $u\in (t-\beta,t+\beta)$, we get
$$\dist(\varphi_t(x),\varphi_t(y))\le \dist(\varphi_t(x),\varphi_u(x))+\dist(\varphi_u(x),\varphi_u(y))+\dist(\varphi_u(y),\varphi_t(y))$$
which, together with \eqref{eq.phiz}  and \eqref{eq.phit}, implies
\begin{equation}\label{eq:contunif}
\dist(\varphi_u(x),\varphi_u(y)) >\alpha/2.
\end{equation}

Consider  now $E\subseteq X$ being $(\varphi, T, \alpha)$-separated. As $\varphi$ is continuous and each dynamical ball is contained in the corresponding $\tau$-dynamical ball, the set $E$ is finite. By definition, for every $x,y\in E$, $x\ne y$, there exists $t\in [0,T]$ such that
$$\dist(\varphi_t (x),\varphi_t (y))\geq\alpha.$$
Choose $0<\delta<\min\{\eta, \beta/2\}$ and $0<\epsilon<\alpha/2$. By \eqref{eq:contunif}, if  $u\in (t-2\delta,t+2\delta)$, then
$$\dist(\varphi_u(x),\varphi_u(y))>\alpha/2 >\epsilon.$$
 If $t\in\,J_{T,\delta}^\tau(x)$, then $y\notin  B^\tau(x,\varphi, T,\epsilon,\delta)$. Otherwise,  $J_{T,\delta}^\tau(x) \cap(t-2\delta,t+2\delta)\ne\emptyset$, and then  $y\notin  B^\tau(x,\varphi, T,\epsilon,\delta)$. So, $E$ is $(\varphi,\tau, T, \epsilon,\delta)$-separated. Consequently, for every $0<\delta<\min\{\eta, \beta/2\}$, $0<\epsilon<\alpha/2$ and $T> 0$,
$$s(\varphi, T,\alpha)\leq s^\tau(\varphi, T,\epsilon,\delta), $$
and so
$$\frac1{T}\log s(\varphi, T,\alpha)\leq \frac1{T}\log  s^\tau(\varphi, T,\epsilon,\delta).$$
Taking the upper limit as $T\to +\infty$, we get
$$h(\varphi,\alpha)\leq h^\tau(\varphi,\epsilon,\delta).$$
Now, if $\epsilon\to 0^+$,
$$h(\varphi,\alpha)\leq h^\tau(\varphi,\delta).$$
Noticing that $\beta=\beta(\alpha)$ and $\delta=\delta(\alpha)$, we deduce that, when $\alpha\to 0^+$, we have $\delta\to 0^+$, and therefore
$$h_{\rm{top}}(\varphi) \leq h_{\rm{top}}^\tau(\varphi).$$
\endproof

\subsection{Monotonicity of $h^{\tau}_{\rm{\top}}$}
Given $\tau$ and $\tau'$ two admissible functions in $X$, we say that $\tau'$ refines $\tau$, and write $\tau'\succ\tau$, if for all $x\in X$ and $n\in\N$ there exists $m\in\N$ such that $\tau_n(x)=\tau'_m(x)$. Next lemma proves that the new concept of topological entropy is monotone with respect to the refinement of admissible functions.

\begin{lemma}\label{le.refine} For any semiflow $\psi:\R^+_0\times X \rightarrow X$, if $\tau'\succ\tau$ then $h_{\rm{top}}^\tau(\psi)\geq h^{\tau'}_{\top}(\psi)$.
\end{lemma}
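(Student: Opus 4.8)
The plan is to reduce the whole statement to a pointwise comparison of the sets of times at which orbit separation is measured, and then to transport the resulting inclusion of dynamical balls through the definitions of separated sets and entropy, in that order.

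First I would fix $x\in X$, $T>0$ and a $\delta>0$ small enough to be admissible for both functions simultaneously, namely $0<\delta<\min\{\eta,\eta'\}/2$, where $\eta$ and $\eta'$ are the separation constants associated to $\tau$ and $\tau'$ in Definition~\ref{def.admissible}. The central observation is that refinement forces the inclusion
$$J^{\tau'}_{T,\delta}(x)\subseteq J^\tau_{T,\delta}(x).$$
Indeed, whenever $\tau_j(x)\leq T$ the hypothesis $\tau'\succ\tau$ provides an index $m$ with $\tau_j(x)=\tau'_m(x)$, so $\tau'_m(x)\leq T$ and the excised interval $]\tau_j(x)-\delta,\tau_j(x)+\delta[$ occurs among the intervals removed from $(0,T]$ in the definition of $J^{\tau'}_{T,\delta}(x)$. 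Hence the union of intervals deleted to form $J^\tau_{T,\delta}(x)$ is contained in the union deleted to form $J^{\tau'}_{T,\delta}(x)$, and removing a larger set from $(0,T]$ yields a smaller complement.

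From this I would deduce the reverse inclusion of dynamical balls,
$$B^\tau(x,\psi,T,\epsilon,\delta)\subseteq B^{\tau'}(x,\psi,T,\epsilon,\delta),$$
since membership in $B^\tau$ requires the estimate $\dist(\psi_t(x),\psi_t(y))<\epsilon$ on the larger time set $J^\tau_{T,\delta}(x)$, which in particular imposes it on $J^{\tau'}_{T,\delta}(x)$. Consequently every $(\psi,\tau',T,\epsilon,\delta)$-separated set is automatically $(\psi,\tau,T,\epsilon,\delta)$-separated: if distinct points $x,y\in E$ satisfy $y\notin B^{\tau'}(x,\psi,T,\epsilon,\delta)$, then by the inclusion above $y\notin B^\tau(x,\psi,T,\epsilon,\delta)$ as well. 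Taking the supremum of cardinalities over all finite separated sets therefore gives $s^{\tau'}(\psi,T,\epsilon,\delta)\leq s^\tau(\psi,T,\epsilon,\delta)$.

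Finally I would propagate this inequality through the limits defining the entropy. Dividing by $T$ and passing to $\limsup_{T\to+\infty}$ yields $h^{\tau'}(\psi,\epsilon,\delta)\leq h^\tau(\psi,\epsilon,\delta)$; letting $\epsilon\to 0^+$ and then $\delta\to 0^+$ preserves the inequality and produces $h^{\tau'}_{\top}(\psi)\leq h^\tau_{\rm{top}}(\psi)$, as claimed. I do not expect a serious obstacle: the argument is a monotonicity bookkeeping, and the only delicate point is that $\tau$ and $\tau'$ carry distinct admissibility constants. This is harmless because the final entropy is defined by letting $\delta\to 0^+$, so it suffices that the ball inclusion hold for all sufficiently small $\delta$, which is exactly what the choice $0<\delta<\min\{\eta,\eta'\}/2$ guarantees.
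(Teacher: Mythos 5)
Your proposal is correct and follows essentially the same route as the paper: the paper's proof simply asserts that a $(\psi,\tau',T,\epsilon,\delta)$-separated set is $(\psi,\tau,T,\epsilon,\delta)$-separated and concludes $s^{\tau'}(\psi,T,\epsilon,\delta)\leq s^{\tau}(\psi,T,\epsilon,\delta)$, while you supply the underlying chain $J^{\tau'}_{T,\delta}(x)\subseteq J^{\tau}_{T,\delta}(x)$, hence $B^{\tau}\subseteq B^{\tau'}$, hence the separation implication. Your extra care with the two admissibility constants is a harmless refinement that the paper leaves implicit.
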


\begin{proof} Given $\epsilon >0$, $T>0$, $0<\delta <\eta/2$ and a finite $(\psi,\tau', T, \epsilon, \delta)$-separated subset $E$, as $\tau'\succ \tau$, the set $E$ is a $(\psi,\tau, T, \epsilon, \delta)$-separated as well. Therefore
$$s^{\tau'}(\psi, T,\epsilon,\delta)\leq s^\tau(\psi, T,\epsilon,\delta).$$
\end{proof}

Given two semiflows $\psi:\R^+_0\times X \rightarrow X$ and $\tilde{\psi}:\R^+_0\times \tilde{X} \rightarrow \tilde{X}$, acting on metric spaces $(X,d)$ and $(\tilde{X},\tilde{d})$, and two admissible functions $\tau$ and $\tilde{\tau}$ defined on $X$ and $\tilde{X}$, respectively, we say that a uniformly continuous surjective map $h:X \rightarrow \tilde{X}$ is a \emph{$(\tau,\tilde{\tau})$-semiconjugacy} between $\psi$ and $\tilde{\psi}$ if
\begin{enumerate}
\item $\tilde{\psi}_t\circ h=h\circ \psi_t$, for all $ t\geq 0$;
\item $\tilde{\tau}(h(x))=\tau(x)$, for all $x \in X.$
\end{enumerate}

\begin{lemma}\label{le.invariant} Let $h:X\to \tilde{X}$ be a finite-to-one $(\tau,\tilde{\tau})$-semiconjugacy between the semiflows $\psi$ and $\tilde{\psi}$ on $X$ and $\tilde{X}$, with admissible functions $\tau$ and $\tilde{\tau}$, respectively. Then $h^{\tau}_{\top}(\psi)\ge h^{\tilde{\tau}}_{\top}(\tilde{\psi})$.
\end{lemma}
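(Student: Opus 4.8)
The plan is to lift separated sets from $\tilde X$ to $X$ through the surjection $h$, using uniform continuity to transfer the distance estimates and the compatibility condition $\tilde\tau\circ h=\tau$ to identify the relevant time sets. First I would fix $T>0$, a value $\delta>0$ admissible for both $\tau$ and $\tilde\tau$, and $\epsilon>0$, and start from a finite $(\tilde\psi,\tilde\tau,T,\epsilon,\delta)$-separated set $\tilde E\subseteq\tilde X$. Since $h$ is surjective, for each $\tilde y\in\tilde E$ I select a single preimage $x_{\tilde y}\in h^{-1}(\tilde y)$; because the fibres of distinct points are disjoint, the set $E=\{x_{\tilde y}:\tilde y\in\tilde E\}$ satisfies $|E|=|\tilde E|$. (Only surjectivity is needed here; the finite-to-one hypothesis plays no role in this inequality.)

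Next, invoking uniform continuity of $h$, given $\epsilon>0$ I would choose $\epsilon'>0$ such that $d(a,b)<\epsilon'$ implies $\tilde d(h(a),h(b))<\epsilon$, taking $\epsilon'=\epsilon'(\epsilon)$ so that $\epsilon'\to0^+$ as $\epsilon\to0^+$. The crucial step is to check that $E$ is $(\psi,\tau,T,\epsilon',\delta)$-separated. Let $x\neq y$ in $E$; then $h(x)\neq h(y)$ are distinct points of $\tilde E$, so there is $t\in J^{\tilde\tau}_{T,\delta}(h(x))$ with $\tilde d(\tilde\psi_t(h(x)),\tilde\psi_t(h(y)))\geq\epsilon$. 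Here I use both properties of a $(\tau,\tilde\tau)$-semiconjugacy: property (2), namely $\tilde\tau(h(x))=\tau(x)$, equates the two sequences term by term, hence equates the cutoff $n_T$ and the deleted intervals, giving $J^{\tilde\tau}_{T,\delta}(h(x))=J^\tau_{T,\delta}(x)$; and property (1), namely $\tilde\psi_t\circ h=h\circ\psi_t$, rewrites the inequality as $\tilde d(h(\psi_t(x)),h(\psi_t(y)))\geq\epsilon$. The contrapositive of the choice of $\epsilon'$ then yields $d(\psi_t(x),\psi_t(y))\geq\epsilon'$ for this $t\in J^\tau_{T,\delta}(x)$, so $y\notin B^\tau(x,\psi,T,\epsilon',\delta)$, which is exactly the required separation.

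Finally I would pass to the limits. The construction gives $s^{\tilde\tau}(\tilde\psi,T,\epsilon,\delta)\leq s^\tau(\psi,T,\epsilon',\delta)$ (take the supremum over finite $\tilde E$); applying $\tfrac1T\log$ and $\limsup_{T\to+\infty}$ yields $h^{\tilde\tau}(\tilde\psi,\epsilon,\delta)\leq h^\tau(\psi,\epsilon',\delta)\leq h^\tau(\psi,\delta)$, the last step because $h^\tau(\psi,\delta)$ is the supremum of $h^\tau(\psi,\cdot,\delta)$ over radii. Letting $\epsilon\to0^+$, so that $\epsilon'\to0^+$, gives $h^{\tilde\tau}(\tilde\psi,\delta)\leq h^\tau(\psi,\delta)$, and then $\delta\to0^+$ gives $h^{\tilde\tau}_{\top}(\tilde\psi)\leq h^\tau_{\top}(\psi)$, as claimed.

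The main obstacle is bookkeeping rather than depth. The two points that must be handled with care are, first, that condition (2) genuinely identifies $J^{\tilde\tau}_{T,\delta}(h(x))$ with $J^\tau_{T,\delta}(x)$ — so that the separation witness $t$ produced downstairs actually lies in the admissible time set used upstairs — and second, that the radius $\epsilon'$ furnished by uniform continuity can be chosen to vanish together with $\epsilon$, which is what allows the limit in $\epsilon$ to survive. Everything else is the standard factor-map estimate adapted to the $\tau$-dynamical balls.
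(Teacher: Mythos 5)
Your proof is correct and follows essentially the same route as the paper: lift a $(\tilde\psi,\tilde\tau,T,\epsilon,\delta)$-separated set through the surjection $h$, use uniform continuity to turn $\tilde d$-separation at scale $\epsilon$ into $d$-separation at scale $\epsilon'$, and use $\tilde\tau\circ h=\tau$ to identify $J^{\tilde\tau}_{T,\delta}(h(x))$ with $J^{\tau}_{T,\delta}(x)$. Your choice of a single preimage per point is in fact a small improvement on the paper's argument, which takes the full preimage $A=h^{-1}(B)$ (so that finiteness requires the finite-to-one hypothesis, and two distinct points of $A$ lying in the same fibre are not obviously separated); as you observe, only surjectivity is genuinely needed for this inequality.
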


\begin{proof}
Let $\psi:\R^+_0\times X \rightarrow X$ and $\tilde{\psi}:\R^+_0\times \tilde{X} \rightarrow \tilde{X}$ be two semiconjugate semiflows and $h$ be such a semiconjugacy. As $h$ is uniformly continuous, given $\epsilon >0$ there exists $\gamma>0$ such that
$$d(a,b)<\gamma \quad \Rightarrow\quad \tilde{d}(h(a), h(b))<\epsilon \quad \forall a,b \in X.$$
Fix $T>0$ and $0<\delta<\eta/2$, and consider a finite $(\tilde{\psi},\tilde{\tau}, T, \epsilon,\delta)$-separated set $B\subseteq \tilde{X}$. Then $A=h^{-1}(B)$ is finite, although it may have a cardinal bigger or equal than the one of $B$. Moreover, $A$ is a $(\psi,\tau, T, \gamma,\delta)$-separated set of $X$. Indeed, for all $a,b \in A$, there are $t_n \in \,J_{T,\delta}^{\tilde{\tau}}(h(a))$ and $s_n \in \,J_{T,\delta}^{\tilde{\tau}}(h(b))$ such that
$$\tilde{d}(\tilde{\psi}_{t_n}(h(a)), \tilde{\psi}_{t_n}(h(b)))\geq \epsilon \quad \text{ and } \quad \tilde{d}(\tilde{\psi}_{s_n}(h(a)), \tilde{\psi}_{s_n}(h(b)))\geq \epsilon$$
that is,
$$\tilde{d}(h\circ \psi_{t_n}(a), h\circ \psi_{t_n}(b))\geq \epsilon \quad \text{ and } \quad \tilde{d}(h\circ \psi_{s_n}(a), h\circ \psi_{s_n}(b))\geq \epsilon.$$
Therefore,
$$d (\psi_{t_n}(a), \psi_{t_n}(b))\geq \gamma \quad \text{ and } \quad d(\psi_{s_n}(a), \psi_{s_n}(b))\geq \gamma.$$
Taking into account that, by definition of semiconjugacy, $t_n \in \,J_{T,\delta}^\tau(a)$ and $s_n \in \,J_{T,\delta}^\tau(b)$, we deduce that
$$s^\tau(\psi, T,\gamma,\delta)\geq s^{\tilde{\tau}}(\tilde{\psi}, T,\epsilon,\delta).$$
When $\epsilon \rightarrow 0$, we have $\gamma=\gamma(\epsilon)\rightarrow 0$, and so we finally conclude that
$$h_{\rm{top}}^\tau(\psi) \geq h_{\rm{top}}^{\tilde{\tau}}(\tilde{\psi}).$$
\end{proof}

\section{Time and space restrictions}\label{se.timespace}

Consider a compact metric space $X$, a continuous semiflow $\varphi:\R^+_0 \times X\to X$, a compact set $D\subset X$ and a continuous map $I:D \to X$ under the assumptions of Theorem~\ref{teo.modelocontinuo}. Let  $\tau$ be the admissible function with respect to $D$ of the impulsive times associated to the impulsive semiflow $(X,\varphi, D, I)$.

It follows from the assumption that $I(D)$ is transverse (see Definition~\ref{def.transverse}) that the function that assigns to each $x\in X$ the sequence of visit times to $I(D)$, say $\theta(x):=(\theta_n(x))_{n \in \mathbb{N}}$, is an admissible function with respect to $I(D)$. Moreover, as $I(D)\cap D=\emptyset$, we may re-index the sequences $\tau(x)$ and $\theta(x)$ in order to assemble them in a unique admissible function $\tau'$, with respect to both $D$ and $I(D)$, where $\tau'_n(x)$ is either $\tau_m(x)$ or $\theta_m(x)$, for some $m$. This way, we have $\tau'\succ \tau$.

\begin{lemma}\label{le.equal1} $h_{\rm{top}}^{\tau'}(\psi)=h^\tau_{\rm{top}}(\psi).$
\end{lemma}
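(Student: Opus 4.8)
The plan is to establish the two inequalities separately. Since $\tau'\succ\tau$, Lemma~\ref{le.refine} immediately yields $h^\tau_{\rm{top}}(\psi)\geq h^{\tau'}_{\rm{top}}(\psi)$, so the entire content of the lemma lies in the reverse bound $h^{\tau'}_{\rm{top}}(\psi)\geq h^\tau_{\rm{top}}(\psi)$. To obtain it I would show that, after only mild adjustments of the radius and of the time-exclusion parameter, every $(\psi,\tau,T,\alpha,\delta)$-separated set is already $(\psi,\tau',T,\alpha/2,\delta)$-separated. This gives $s^\tau(\psi,T,\alpha,\delta)\leq s^{\tau'}(\psi,T,\alpha/2,\delta)$, whence, after passing to the limits, the claimed inequality.

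The underlying mechanism is that $J^{\tau'}_{T,\delta}(x)\subseteq J^\tau_{T,\delta}(x)$, the difference being exactly the $\delta$-intervals around those visit times $\theta_j(x)$ to $I(D)$ that are not impulse times of $x$. At such a time the trajectory $\psi_\cdot(x)$ crosses $I(D)$ transversally along a genuine $\varphi$-orbit segment, hence is continuous there; by admissibility the nearest impulse time of $x$ sits at distance at least $\eta$, so for $\delta<\eta/2$ the whole interval $[\theta_j(x)-\delta,\theta_j(x)+\delta]$ is free of $x$-impulses and its endpoints lie in $J^{\tau'}_{T,\delta}(x)$. Thus, if a pair $x\neq y$ is separated at a time $t$ falling in one of these extra holes, I would relocate the witnessing time to such an endpoint: applying Lemma~\ref{le.alfabeta} with parameter $\alpha/4$ to fix $\delta<\beta/2$, moving the time by at most $2\delta<\beta$ shifts $\psi_\cdot(x)$ by less than $\alpha/4$, so a separation of size $\alpha$ survives as a separation of size at least $\alpha/2$ at a time in $J^{\tau'}_{T,\delta}(x)$.

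The main obstacle is the asymmetry of the construction: separation is measured through the exclusion set $J^\tau_{T,\delta}(x)$ of the \emph{first} point only, so although $\psi_\cdot(x)$ is continuous on the relocation window, $\psi_\cdot(y)$ need not be, since $y$ may undergo an impulse inside $(\theta_j(x)-\delta,\theta_j(x)+\delta)$. I would resolve this by exploiting the admissibility gap once more: as consecutive impulse times of $y$ differ by at least $\eta>2\delta$, the window contains at most one impulse of $y$, say at $s$. The witnessing time $t$ then lies on one side of $s$, on which $\psi_\cdot(y)$ is continuous all the way to the corresponding endpoint $\theta_j(x)\pm\delta$ (the next impulse of $y$ being more than $\eta$ away). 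Relocating $t$ to that endpoint therefore keeps \emph{both} trajectories continuous and preserves the separation up to the factor $1/2$.

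Finally, I would pass to the limits exactly as in the proof of Theorem~\ref{teo.coincide}. Writing $\delta=\delta(\alpha)\to0$ as $\alpha\to0$, forced by the choice $\delta<\beta(\alpha)/2$, the chain $h^\tau(\psi,\alpha,\delta(\alpha))\leq h^{\tau'}(\psi,\alpha/2,\delta(\alpha))\leq h^{\tau'}_{\rm{top}}(\psi)$, combined with the monotonicity of $h^\tau(\psi,\epsilon,\delta)$ in both variables, yields $h^\tau_{\rm{top}}(\psi)\leq h^{\tau'}_{\rm{top}}(\psi)$, and together with Lemma~\ref{le.refine} this gives the equality.
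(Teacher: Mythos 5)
Your proposal is correct, and it shares the paper's overall skeleton: one inequality comes from Lemma~\ref{le.refine}, and the other from showing that a witnessing time falling into one of the extra $\delta$-holes around a non-impulsive visit time $\theta_j(x)$ to $I(D)$ can be relocated to an endpoint of that hole at the cost of halving the separation radius. The genuine difference is in how the \emph{second} point of the pair is controlled across the relocation window. The paper builds a compact neighborhood $V=\{x:\dist(x,I(D))\le\Delta\}$ on which $\tau_1\ge\rho$ (this uses the continuity of $\tau^*$ and $I(D)\cap D=\emptyset$), always relocates to the left endpoint $m=\theta_j(e)-\delta$, and shows that $\psi_m(z)$ lands in $V$, so that the trajectory of $z$ is impulse-free over the whole window; it also routes the counting through a maximal $\tau'$-separated set extended by a single point. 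You instead use only the admissibility gap: a window of length $2\delta<\eta$ contains at most one impulse of $y$, so you relocate to the endpoint on the impulse-free side of the witnessing time, where both $\psi_\cdot(x)$ and $\psi_\cdot(y)$ are genuine $\varphi$-orbit segments and Lemma~\ref{le.alfabeta} gives the quantitative $\alpha/4$ bounds. Your version is more economical --- it needs neither $V$, $\rho$, $\Delta$ nor the continuity of $\tau^*$ for this lemma --- and your direct claim that every $(\psi,\tau,T,\alpha,\delta)$-separated set is $(\psi,\tau',T,\alpha/2,\delta)$-separated is cleaner than the paper's one-point-extension argument, which as written only compares $|E|+1$ with $s^{\tau'}(\psi,T,\epsilon/2,\delta)$. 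The price of choosing the endpoint pair by pair is that the chosen endpoint must still lie in $J^{\tau'}_{T,\delta}(x)\subseteq(0,T]$: the hole abutting $T$ (or $0$) can force you to the unavailable side. This is a boundary effect, harmless for the growth rate (compare instead with $s^{\tau'}(\psi,T+1,\alpha/2,\delta)$), and the paper's proof leaves the analogous issue for its left endpoint equally implicit.
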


\begin{proof}
As $\tau'\succ \tau$, by Lemma~\ref{le.refine}, we have $h_{\rm{top}}^{\tau'}(\psi)\leq h^\tau_{\rm{top}}(\psi).$

Concerning the other inequality, we first observe that as the impulsive semiflow $\psi$ is continuous on points out of $D$, the set $I(D)$ is compact and disjoint from $D$ and $\tau^*$ is continuous, and strictly positive in the complement of $D$, we may find a compact neighborhood of $I(D)$, say
$$V=\{x \in X: \dist(x, I(D))\leq \Delta\}$$
for some small enough $\Delta>0$, such that
\begin{itemize}
\item[(a)] there exists $\rho>0$ such that
\begin{equation}\label{eq.rho}
\tau_1(x)\geq \rho \quad \forall\,\, x \in V;
\end{equation}
\item[(b)] for all $x \in V$, there is $\zeta_x\geq \eta/2$ such that the map $t \in [0,\zeta_x] \to \psi_t(x)$ is continuous;
\item[(c)] given $\alpha>0$, there exists $0< \beta < \rho$ such that, if $x\in V$ and $0\leq u\leq \beta$, then $\dist(\psi_u(x), x)<\alpha$.
\end{itemize}
Take $T>0$, $0<\epsilon< \Delta/2$, $\alpha=\epsilon/8$ and its corresponding $\beta$ given by (c) above. Let $0<\delta <\min\{\eta/2, \beta/2\}$. We already know that
$$s^{\tau'}(\psi, T,\epsilon,\delta)\leq s^\tau(\psi, T,\epsilon,\delta).$$
If $s^{\tau'}(\psi, T,\epsilon,\delta) < s^\tau(\psi, T,\epsilon,\delta)$, then $s^{\tau'}(\psi, T,\epsilon,\delta)$ is finite and we may consider a maximal $(\psi,\tau', T,\epsilon,\delta)$-separated set $E$. The set $E$ is also $(\psi,\tau, T,\epsilon,\delta)$-separated, though not maximal. Therefore, we may find $z \in X$ such that $E\cup \{z\}$ is still $(\psi, \tau,T, \epsilon, \delta)$-separated, but no longer $(\psi, \tau',T, \epsilon, \delta)$-separated. This means, in particular, that
\begin{enumerate}
\item for every $x \in E$,
$$\exists\, v=v_x \in J_{T,\delta}^\tau(x): \quad \dist(\psi_v(z),\psi_v(x))\geq\epsilon$$
and
$$\exists\, u=u_z \in J_{T,\delta}^\tau(z): \quad \dist(\psi_u(z),\psi_u(x))\geq\epsilon;$$
\item there is $e \in E$ such that either $z \in B^{\tau'}(e,\psi,T,\epsilon,\delta)$, that is,
$$\forall\, t \in J_{T,\delta}^{\tau'}(e) \quad  \dist(\psi_t(z),\psi_t(e))<\epsilon$$
in which case
$$ v_e \in J_{T,\delta}^\tau(e) \setminus J_{T,\delta}^{\tau'}(e),$$
which means that there is $\theta_j(e)$ satisfying
$$v_e \in (\theta_j(e)-\delta, \theta_j(e)+\delta);$$
or $e \in B^{\tau'}(z,\psi,T,\epsilon,\delta)$, that is,
$$\forall\, r \in J_{T,\delta}^{\tau'}(z) \quad  \dist(\psi_r(z),\psi_r(e))<\epsilon$$
and so we must have
$$u_z \in J_{T,\delta}^\tau(z) \setminus J_{T,\delta}^{\tau'}(z)$$
or, equivalently, there is $\theta_\ell(z)$ such that
$$u_z \in (\theta_\ell(z)-\delta, \theta_\ell(z)+\delta).$$
\end{enumerate}

Assume that $z \in B^{\tau'}(e,\psi,T,\epsilon,\delta)$ and consider
$$m = \theta_j(e)-\delta.$$
As $m \in J_{T,\delta}^{\tau'}(e)$, we know that
$$\dist(\psi_m(z),\psi_m(e))<\epsilon.$$
Moreover, as $\theta_j(e)-m < \delta<\beta$ and $\psi_{\theta_j(e)}(e) \in I(D)\subset V$, we have
$$\dist(\psi_m(e),\psi_{\theta_j(e)}(e))<\alpha = \epsilon/8$$
and therefore, as $0<\max\{v_e-m, |\theta_j(e)-v_e|\}<\beta$, we get
\begin{eqnarray*}
\dist(\psi_m(e),\psi_{v_e}(e)) &\leq& \dist(\psi_m(e), \psi_{\theta_j(e)}(e)) + \dist(\psi_{\theta_j(e)}(e), \psi_{v_e}(e)) \\
& < &\alpha + \alpha < \epsilon/4
\end{eqnarray*}
and
\begin{eqnarray*}
\dist(\psi_m(z),I(D)) &\leq& \dist(\psi_m(z), \psi_{\theta_j(e)}(e)) \\
& \leq& \dist(\psi_m(z), \psi_m(e)) + \dist(\psi_m(e), \psi_{\theta_j(e)}(e)) \\
& < &\epsilon + \epsilon/4 < \Delta.
\end{eqnarray*}
That is, $\psi_m(z) \in V$ and so, as $0<v_e-m<\beta$,
$$\dist(\psi_m(z), \psi_{v_e}(z)<\epsilon/4.$$
Analogously, as $v_e-m < \delta<\beta$ and $\psi_{v_e}(e) \in V$, we have
$$\dist(\psi_m(e),\psi_{v_e}(e))<\epsilon/4.$$
Thus
\begin{eqnarray*}
\epsilon &\leq& \dist(\psi_{v_e}(z),\psi_{v_e}(e)) \\
&\leq& \dist(\psi_{v_e}(z),\psi_m(z))+\dist(\psi_m(z),\psi_m(e))+\dist(\psi_m(e),\psi_{v_e}(e)) \\
&\leq& \epsilon/4 +\dist(\psi_m(z),\psi_m(e))+\epsilon/4
\end{eqnarray*}
hence
$$\dist(\psi_m(z),\psi_m(e)) \geq \epsilon/2.$$
This means that $z \notin B^{\tau'}(e,\psi,T,\epsilon/2,\delta)$.

In a similar way, we conclude that, if $e \in B^{\tau'}(z,\psi,T,\epsilon,\delta)$, then $e \notin B^{\tau'}(z,\psi,T,\epsilon/2,\delta)$. In any case, we deduce that the set $E\cup\{z\}$, which is $s^{\tau}(\psi, T,\epsilon,\delta)$-separated, is $s^{\tau'}(\psi, T,\epsilon/2,\delta)$-separated as well.

Consequently, for every $T>0$, $0<\delta<\min\{\eta/2, \beta/2\}$ and $0< \epsilon <\Delta$, we get
$$s^{\tau}(\psi, T,\epsilon,\delta) \leq s^{\tau'}(\psi, T,\epsilon/2,\delta)$$
and therefore
$$h_{\rm{top}}^{\tau}(\psi)\leq h^{\tau'}_{\rm{top}}(\psi).$$
\end{proof}

Let $a>0$ be the distance between the compact sets $D$ and $I(D)$, and assume that
\begin{equation}\label{eq.xi}
0<\xi<\min\,\{\eta/4, \xi_0/2, a/2\},
\end{equation}
where $\eta>0$ and $\xi_0>0$ are given in Definition~\ref{def.admissible} and Definition~\ref{def.tubecondition}, respectively.
The next result shows that the $\tau$ and $\tau'$-topological entropies of the semiflows $\psi$ and $\psi_{|_{X_\xi}}$ coincide.

\begin{lemma}\label{le.equal2} %$h_{\rm{top}}^{\tau}(\psi)=h^{\tau}_{\rm{top}}(\psi_{|_{X_\xi}})$ and
$h_{\rm{top}}^{\tau'}(\psi)=h^{\tau'}_{\rm{top}}(\psi_{|_{X_\xi}})$.
\end{lemma}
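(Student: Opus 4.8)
The plan is to prove the two inequalities separately, the first being routine and the second carrying all the content. For $h^{\tau'}_{\rm{top}}(\psi_{|_{X_\xi}})\le h^{\tau'}_{\rm{top}}(\psi)$ I would simply observe that, since $X_\xi$ is forward invariant under $\psi$, the orbit of a point of $X_\xi$ is the same whether computed with $\psi$ or with $\psi_{|_{X_\xi}}$, and the admissible function $\tau'$ and the sets $J^{\tau'}_{T,\delta}$ are unchanged by restriction. Hence every $(\psi_{|_{X_\xi}},\tau',T,\epsilon,\delta)$-separated subset of $X_\xi$ is automatically $(\psi,\tau',T,\epsilon,\delta)$-separated in $X$, so that $s^{\tau'}(\psi_{|_{X_\xi}},T,\epsilon,\delta)\le s^{\tau'}(\psi,T,\epsilon,\delta)$, and the inequality follows after passing to the limits in $T$, $\epsilon$ and $\delta$.

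For the reverse inequality the key geometric fact I would establish first is that, under the standing choice \eqref{eq.xi}, one has $\psi_\xi(X)\subseteq X_\xi$. Indeed, a point $d\in D$ satisfies $\tau_1(d)\ge\eta>\xi$, and a point $\varphi_s(d)\in D_\xi$ satisfies $\tau_1(\varphi_s(d))=\tau_1(d)-s>\xi$; in both cases no impulse occurs on $[0,\xi]$, so there $\psi_t$ coincides with $\varphi_t$, and items (2)--(3) of Definition~\ref{def.tubecondition} then force $\varphi_\xi$ of such a point to have left $D\cup D_\xi$, while points already in $X_\xi$ stay there by forward invariance. Writing $\Phi=\psi_\xi:X\to X_\xi$, item (1) of Definition~\ref{def.admissible} gives $\tau'_n(\Phi(x))=\tau'_n(x)-\xi$ (dropping the now non-positive values), which yields, up to the bounded initial window $(0,\xi]$, the identity
$$J^{\tau'}_{T-\xi,\delta}(\Phi(x))=\bigl(J^{\tau'}_{T,\delta}(x)\cap(\xi,T]\bigr)-\xi,$$
together with the semiflow relation $\psi_t(x)=\psi_{t-\xi}(\Phi(x))$ valid for all $t\ge\xi$.

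With this in hand I would push a finite $(\psi,\tau',T,\epsilon,\delta)$-separated set $E\subseteq X$ forward by $\Phi$. The displayed identity shows that whenever two points of $E$ are separated at some time $t_0>\xi$, their images are separated by $\psi_{|_{X_\xi}}$ over the horizon $T-\xi$ at the time $t_0-\xi$; consequently, two points of $E$ whose images lie in a common $\tfrac\epsilon2$-dynamical ball of $\psi_{|_{X_\xi}}$ must already be separated within the fixed window $(0,\xi]$. Comparing $\Phi(E)$ against a maximal, hence spanning, $(\psi_{|_{X_\xi}},\tau',T-\xi,\tfrac\epsilon2,\delta)$-separated subset of $X_\xi$ and using the triangle inequality to pass from radius $\tfrac\epsilon2$ to $\epsilon$, this bounds $|E|$ by $C\cdot s^{\tau'}(\psi_{|_{X_\xi}},T-\xi,\tfrac\epsilon2,\delta)$, where $C=s^{\tau'}(\psi,\xi,\epsilon,\delta)$ counts points pairwise separated over the window $(0,\xi]$. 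Applying $\tfrac1T\log$ and letting $T\to+\infty$ kills the constant $C$ and replaces $T$ by $T-\xi$ harmlessly, after which $\epsilon\to0^+$ and $\delta\to0^+$ give $h^{\tau'}_{\rm{top}}(\psi)\le h^{\tau'}_{\rm{top}}(\psi_{|_{X_\xi}})$.

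The step I expect to be the main obstacle is the bound on $C$: one must show that the number of points of $E$ collapsing into a single dynamical ball of the restricted system — equivalently, the separation number $s^{\tau'}(\psi,\xi,\epsilon,\delta)$ of the \emph{discontinuous} semiflow over the fixed horizon $\xi$ — is finite and independent of $T$. This is exactly where the $\tau'$-machinery pays off: on $J^{\tau'}_{\xi,\delta}(x)$ the orbit avoids the $\delta$-neighborhoods of the impulse and $I(D)$-visit times, so one can combine the compactness of $X$ with the local continuity of $\psi$ away from those times to produce such a bound. A secondary technical point, to be handled carefully, is the endpoint discrepancy between $J^{\tau'}_{T,\delta}(x)$ and $J^{\tau'}_{T-\xi,\delta}(\Phi(x))$ near $0$, which must be absorbed into the same bounded window $(0,\xi]$, and the radius adjustment from $\epsilon$ to $\tfrac\epsilon2$ when comparing dynamical balls centred at different base points.
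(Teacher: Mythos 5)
Your first inequality and the geometric preliminaries (forward invariance of $X_\xi$, the inclusion $\psi_\xi(X)\subseteq X_\xi$, the shift identity for $J^{\tau'}$ under $\psi_\xi$) are fine, but the reverse inequality as you set it up has a genuine gap exactly where you predict it: the finiteness of $C=s^{\tau'}(\psi,\xi,\epsilon,\delta)$. Nothing elementary guarantees that a $(\psi,\tau',\xi,\epsilon,\delta)$-separated set is finite for the \emph{discontinuous} semiflow $\psi$ on all of $X$; this is precisely why the paper defines $s^{\tau'}$ as a supremum over \emph{finite} separated sets and allows $\log\infty=\infty$. Your sketch (compactness plus local continuity away from the excised times) does not close this, because the excised windows $J^{\tau'}_{\xi,\delta}(x)$ depend on the centre $x$ and not on the comparison point $y$: two nearby points with slightly different first impulsive times can be thrown $\epsilon$-apart at a time excised for neither of them, and no uniform modulus of continuity for $y\mapsto\psi_t(y)$ is available near $D$. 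In the paper, finiteness of such separation numbers is only obtained \emph{a posteriori}, through the conjugacy with the continuous quotient semiflow of Section 4, so invoking it here would be circular. A secondary unresolved point is your appeal to ``maximal separated, hence spanning'' plus a triangle inequality from $\epsilon/2$ to $\epsilon$: the sets $B^{\tau'}(x,\psi,T,\epsilon,\delta)$ are not balls of a metric (the time set depends on the centre), so the standard separated-versus-spanning comparison does not transfer without extra work.

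The paper avoids both difficulties by a different decomposition. Given a finite $(\psi,\tau',T,\epsilon,\delta)$-separated set $E$, it splits $E=A\cup B$ with $A=E\cap(D\cup D_\xi)$ and $B=E\cap X_\xi$. The set $B$ needs no pushing at all, and only $A$ is moved by $\psi_\xi$; the choice $\xi<\eta/4$ in \eqref{eq.xi} is used to argue that the relevant separation times for points of $A$ exceed $\xi$, so $\psi_\xi(A)$ remains $(\psi_{|_{X_\xi}},\tau',T,\epsilon,\delta)$-separated with the \emph{same} $\epsilon$, and injectivity of $\psi_\xi$ on $A$ (item (3) of Definition~\ref{def.tubecondition}) gives $|E|\le 2\,s^{\tau'}(\psi_{|_{X_\xi}},T,\epsilon,\delta)$. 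The factor $2$ plays the role of your constant $C$ but is unconditional and independent of all parameters. To salvage your version you would have to either prove finiteness of $s^{\tau'}(\psi,\xi,\epsilon,\delta)$ directly --- which appears to need most of the machinery you are not yet entitled to use --- or restrict the push-forward to the points of $D\cup D_\xi$, at which point you have essentially reconstructed the paper's argument.
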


\proof

As $X_\xi \subset X$, %forward invariant under $\psi$
then
$$h^{\tau'}_{\rm{top}}(\psi_{|_{X_\xi}})\leq h^{\tau'}_{\rm{top}}(\psi).$$
We are left to prove the other inequality.

Take $\epsilon >0$, $T>0$, $0<\delta <\eta/4$ and a finite $(\psi, {\tau'},T, \epsilon, \delta)$-separated set $E\subseteq X$. Let
$$A=E\cap (D\cup D_\xi) \quad \text{ and } \quad B=E\cap X_\xi.$$
The set $B$ is $(\psi_{|_{X_\xi}},{\tau'}, T,\epsilon,\delta)$-separated in $X_\xi$, and so its cardinal is smaller than $s^{\tau'}(\psi_{|_{X_\xi}}, T,\epsilon,\delta)$. We claim that the cardinal of $A$ is also bounded by $s^{\tau'}(\psi_{|_{X_\xi}}, T,\epsilon,\delta)$. Indeed, for each pair of points $a,b \in A$, take $u \in J_{T,\delta}^{\tau'}(a)$ and $v \in J_{T,\delta}^{\tau'}(b) $ such that
$$\dist(\psi_u(a),\psi_u(b))\geq\epsilon \quad \text{ and } \quad \dist(\psi_v(a),\psi_v(b))\geq\epsilon$$
that is,
$$\dist(\psi_{u-\xi}(\psi_{\xi}(a)),\psi_{u-\xi}(\psi_{\xi}(b)))\geq\epsilon \quad \text{ and } \quad \dist(\psi_{v-\xi}(\psi_{\xi}(a)),\psi_{v-\xi}(\psi_{\xi}(b)))\geq\epsilon.$$
Let
$$S=\min\,\{t \in J_{T,\delta}^{\tau'}(x): x \in A\}.$$
By Definition~\ref{def.admissible} and condition (\ref{eq.xi}), we have $S \geq \eta/2 - \delta > \eta/4$ and so $\xi < S$. Hence, as ${\tau'}$ is admissible (check item (1) of Definition~\ref{def.admissible}),
$$u-\xi \in J_{T,\delta}^{\tau'}(\psi_\xi(a)) \quad \text{ and } \quad v-\xi \in J_{T,\delta}^{\tau'}(\psi_\xi(b)).$$
Moreover, the points $\psi_{\xi}(a)$ and $\psi_{\xi}(b)$ are not in $D_\xi \cup D$.
Thus the set $\psi_\xi(A)$ is $(\psi_{|_{X_\xi}},{\tau'}, T,\epsilon,\delta)$-separated and so
$$|\psi_\xi(A)|\leq s^{\tau'}(\psi_{|_{X_\xi}}, T,\epsilon,\delta).$$
Finally, by item (3) of Definition~\ref{def.tubecondition}, $|\psi_\xi(A)|=|A|$. Therefore
$$|E|= |A|+|B| = |\psi_{\xi}(A)|+|B| \leq 2\,s^{\tau'}(\psi_{|_{X_\xi}}, T,\epsilon,\delta)$$
and so
$$s^{\tau'}(\psi, T,\epsilon,\delta) \leq 2\,s^{\tau'}(\psi_{|_{X_\xi}}, T,\epsilon,\delta).$$
Letting $T \to +\infty$, this inequality implies that
$$h^{\tau'}(\psi,\epsilon,\delta) \leq h^{\tau'}(\psi_{|_{X_\xi}},\epsilon,\delta)$$
and consequently, as $\epsilon, \delta$ may be chosen arbitrarily small, this last inequality yields
$$h_{\rm{top}}^{\tau'}(\psi) \leq h_{\rm{top}}^{\tau'}(\psi_{|_{X_\xi}}).$$
\endproof

\begin{remark}\label{re.setminus}
As we are assuming that $I(D)$ is transverse, a similar argument proves that $h_{\rm{top}}^{\tau'}(\psi)=h^{\tau'}_{\rm{top}}(\psi_{|_{X_\xi \setminus I(D)}})$.
\end{remark}

\section{A quotient space}\label{sec.quotient}

Given  an impulsive dynamical system $(X,\varphi,D,I)$, consider the quotient  space $ X/_\sim$ endowed with the quotient topology, where $\sim$ is the equivalence relation given by
 $$x\sim y\quad \Leftrightarrow \quad x=y, \quad y=I(x),\quad x=I(y)\quad \text{or}\quad I(x)=I(y).$$
Let $\pi:X\to X/_\sim$ be the natural projection.

\subsection{The induced metric}
If $d$ denotes the metric on $X$, the metric $\tilde d$ in $\pi(X)$ that induces the quotient topology is given by
$$ \tilde d \, (\tilde x,\tilde y) = \inf \, \{d\,(p_1, q_1)+ d\,(p_2, q_2)+\cdots +d\,(p_n,q_n)\},$$
where $p_1, q_1, \dots, p_n, q_n$ is any chain of points in $X$ such that
$ p_1 \sim x$, $q_1 \sim p_2$, $q_2 \sim p_3$, ... $q_n \sim y$; see $\S$23 of \cite{W70}.
In particular, we have
$$\tilde d \, (\tilde x,\tilde y) \leq d\,(x,y), \quad \forall \text{$x,y\in X$.}$$
Yet, the length $n$ of the chains needed to evaluate $\tilde d \, (\tilde x,\tilde y)$ may be arbitrarily large, preventing us from comparing $\tilde d \, (\tilde x,\tilde y)$ with $d\,(p,q)$ for all $p \sim x$ and $q \sim y$. This difficulty is overcome if we are able to uniformly bound the range of $n$; this is feasible, for instance, when the map $I$ does not expand distances.

\begin{lemma}\label{le.chain}
If $I$ is 1-Lipschitz, then for all $\tilde x, \tilde y \in \pi(X)$ there exist $p, q \in X$ such that
$$p \sim x, \quad  q \sim y \quad \text{ and } \quad d(p,q) \leq 2\,\tilde d\,(\tilde x, \tilde y).$$
\end{lemma}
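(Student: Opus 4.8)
The plan is to exploit the \emph{1-Lipschitz} hypothesis exactly as the discussion preceding the statement suggests: when $I$ does not expand distances one should be able to replace the infimum defining $\tilde d(\tilde x,\tilde y)$ by chains of uniformly bounded length, so that $\tilde d$ becomes comparable to the ``direct'' distance between representatives. First I would describe the equivalence classes. Since $I(D)\cap D=\emptyset$, one checks that $x\sim y$ if and only if $\hat I(x)=\hat I(y)$, where $\hat I\colon X\to X$ is the retraction with $\hat I|_D=I$ and $\hat I|_{X\setminus D}=\mathrm{id}$; consequently every class is either a singleton $\{x\}$ with $x\notin D\cup I(D)$, or a set of the form $\{w\}\cup I^{-1}(w)$ with $w\in I(D)$ and $I^{-1}(w)\subset D$. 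As these classes are compact, the number $\min\{d(p,q):p\sim x,\ q\sim y\}$ is attained, so it suffices to prove $d(p,q)\le 2\,\tilde d(\tilde x,\tilde y)$ for the minimizing pair; equivalently, for every chain of cost $C=\sum_{i=1}^n d(p_i,q_i)$ I must produce $p\sim x$ and $q\sim y$ with $d(p,q)\le 2C$.

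The heart of the argument is a collapsing step for two consecutive edges that meet inside a class along points of $D$. Suppose the junction $q_i\sim p_{i+1}$ occurs with $q_i,p_{i+1}\in D$, so that $I(q_i)=I(p_{i+1})=:w$, and suppose the neighbouring endpoints $p_i$ and $q_{i+1}$ also lie in $D$. Two applications of non-expansiveness (legitimate since all four points are in $D$) give $d(I(p_i),w)\le d(p_i,q_i)$ and $d(w,I(q_{i+1}))\le d(p_{i+1},q_{i+1})$, whence the triangle inequality yields
$$d(I(p_i),I(q_{i+1}))\le d(p_i,q_i)+d(p_{i+1},q_{i+1}).$$
This replaces the two edges by a single edge between the canonical representatives $I(p_i)$ and $I(q_{i+1})$ at \emph{no extra cost and with no detour}: the potentially large internal distance $d(q_i,p_{i+1})$ has been absorbed into the common image $w$. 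Iterating this collapse at every interior junction that sits in $D$ telescopes the entire ``$D$-part'' of the chain into essentially one edge, and the surviving factor of $2$ is meant to come from the at most two terminal segments joining the genuine representatives of $\tilde x$ and $\tilde y$ to this collapsed core.

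The step I expect to be the crux is the treatment of the junctions that do \emph{not} lie in $D$ --- those occurring at a point $w\in I(D)$ used as its own representative, or at a singleton class outside $D\cup I(D)$ --- together with the endpoints when $x$ or $y$ themselves avoid $D$. At such a junction $\hat I$ is discontinuous and the collapsing identity is unavailable, since forcing a point of $D$ onto its image would introduce an uncontrolled ``vertical'' term $d(z,I(z))$. The plan is to show that, precisely because $I(D)\cap D=\emptyset$ and $I$ is $1$-Lipschitz, such non-$D$ junctions can be pushed to the two ends of the chain: any interior occurrence is rerouted through a $D$-preimage and re-collapsed, leaving after finitely many reductions a chain of length at most two of the form $p\,\text{--}\,(\text{core in }D)\,\text{--}\,q$. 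A final triangle inequality across these two remaining edges then gives $d(p,q)\le 2C$ with $p\sim x$ and $q\sim y$, and since $d(p,q)$ may be taken to be the attained minimum $\min\{d(p',q'):p'\sim x,\ q'\sim y\}$, passing to the infimum over chains yields $d(p,q)\le 2\,\tilde d(\tilde x,\tilde y)$. Verifying that this reduction genuinely terminates with a uniform bound on the number of surviving edges --- equivalently, that $1$-Lipschitzness forbids the chain from gaining by repeatedly shuttling through the possibly large-diameter classes $\{w\}\cup I^{-1}(w)$ --- is the delicate point on which the whole estimate rests.
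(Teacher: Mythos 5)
You have the right mechanism --- collapsing two adjacent edges of a chain into one by combining the triangle inequality with the fact that $I$ does not expand distances --- but the proposal stops short of a proof at exactly the point where the work is. The paper's argument is a single induction on the chain length: at the first junction $q_1\sim p_2$ it splits into the four cases of the definition of $\sim$, namely $q_1=p_2$, $p_2=I(q_1)$, $q_1=I(p_2)$ and $I(q_1)=I(p_2)$, and in each case replaces $d(p_1,q_1)+d(p_2,q_2)$ by a single term $d(P_1,Q_1)$ with $P_1\in\{p_1,I(p_1)\}$ (so $P_1\sim x$) and $Q_1\in\{q_2,I(q_2)\}$ (so $Q_1\sim q_2\sim p_3$); the shortened chain is therefore still admissible and costs no more. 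You only carry out the fourth case, and only under the extra hypothesis that $p_i$ and $q_{i+1}$ also lie in $D$, deferring everything else to a ``rerouting'' scheme whose termination you yourself flag as unverified. Worse, the case you single out as the crux --- an identity junction $q_i=p_{i+1}$ --- is in fact the trivial one: $d(p_i,q_i)+d(p_{i+1},q_{i+1})=d(p_i,q_i)+d(q_i,q_{i+1})\ge d(p_i,q_{i+1})$ by the triangle inequality alone, with no appeal to $I$, no detour through $D$-preimages, and no loss. So the difficulty around which your plan is organized does not exist, and the plan itself is never executed.

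A second, smaller point: the factor $2$ does not come from ``two surviving terminal segments.'' The collapse is lossless, so the induction yields $\inf\{d(p,q):p\sim x,\ q\sim y\}\le \tilde d(\tilde x,\tilde y)$ exactly (the reverse inequality being obvious), and the $2$ is pure slack used at the very end to pass from this infimum to an actual pair $(p,q)$; your observation that the equivalence classes are compact, so that the infimum over representatives is attained, would in fact let you dispense with the $2$ altogether. Your concern about applying $I$ at points outside its domain is legitimate --- in the cases $p_2=I(q_1)$ and $I(q_1)=I(p_2)$ one needs $p_1\in D$ for $I(p_1)$ to make sense, a point the paper itself passes over silently --- but raising that difficulty without resolving it does not close the gap. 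As written, the proposal is a plan for a proof rather than a proof.
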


\begin{proof} %We will split the argument regarding the Lipschitz constant $C>0$ of the function~$I$.
We will show that
$$\forall \, \tilde x, \tilde y \in \pi(X) \quad \tilde d \, (\tilde x,\tilde y) = \inf \, \{d\,(p, q): p\sim x, q \sim y\}.$$
Clearly, for all $\tilde x, \tilde y \in \pi(X)$, we have
$$ \tilde d \, (\tilde x,\tilde y) \leq \inf \, \{d\,(p, q): p\sim x, \,q \sim y\}.$$
Conversely, take a chain $p_1, q_1, \dots, p_n, q_n \, \in X$ such that
$$ p_1 \sim x,\, q_1 \sim p_2, \,q_2 \sim p_3, \,.. \,q_n \sim y.$$
\begin{enumerate}
\item If $q_1=p_2$, then
$$d\,(p_1,q_1) + d\,(p_2,q_2) = d\,(p_1,q_1) + d\,(q_1,q_2)\geq d\,(p_1,q_2).$$
\item If $p_2=I(q_1)$,  then
$$d\,(p_1,q_1) + d\,(p_2,q_2) \geq d\,(I(p_1),I(q_1)) + d\,(p_2,q_2)\geq d\,(I(p_1),q_2).$$
\item If $q_1=I(p_2)$, then
$$d\,(p_1,q_1) + d\,(p_2,q_2) \geq d\,(p_1,q_1) + d\,(I(p_2),I(q_2))\geq d\,(p_1,I(q_2)).$$
\item If $I(q_1)=I(p_2)$, then
$$d\,(p_1,q_1) + d\,(p_2,q_2) \geq d\,(I(p_1),I(q_1)) + d\,(I(p_2),I(q_2)) \geq d\,(I(p_1),I(q_2)).$$
\end{enumerate}
As $I(q_2) \sim q_2$, we may proceed by induction on $n$, thus concluding that there are $P,Q  \in X$ such that $P \sim x$, $Q \sim y$ and
$$d\,(p_1,q_1) + \cdots + d\,(p_n,q_n) \geq d\,(P,Q).$$
Therefore
$$ \tilde d \, (\tilde x,\tilde y) \geq \inf \, \{d\,(p, q): p\sim x, q \sim y\}.$$
Having proved that
$$ \tilde d \, (\tilde x,\tilde y) = \inf \, \{d\,(p, q): p\sim x, q \sim y\},$$
we may find $p \sim x$ and $q \sim y$ such that $d(p,q) \leq 2\,\tilde d\,(\tilde x, \tilde y).$
\end{proof}

\subsection{An induced semiflow}
Assuming that $I(D)\cap D = \emptyset$, then each point in the set $X_\xi=X\setminus (D_\xi\cup D)$ has a representative of the same equivalence class in $X\setminus D_\xi $. This implies that
\begin{equation}\label{eq.xxi}
\pi(X_\xi)=\pi(X_\xi\cup D)
\end{equation}
and, by the half-tube condition (see item (1) of Definition~\ref{def.tubecondition}), this is a compact set. In particular, $\pi(X_\xi)$ with the quotient topology is a compact metric space: indeed, as $X_\xi\cup D$ is a compact metric space and \eqref{eq.xxi} holds, then $\pi(X_\xi)$ is a compact pseudometric space; moreover, as $D$ is compact and $I:D\to X$ is continuous, $\pi(X_\xi)$ is a $T_0$ space, and so the quotient topology in $\pi(X_\xi)$ is given by a metric; see \cite{AC14} for more details.

For any $x,y\in X_\xi$ we have $x\sim y$ if and only if $x=y$.
This shows that $\pi\vert_{X_\xi}$ is a continuous bijection (not necessarily a homeomorphism) from $X_\xi$ onto $\pi(X_\xi )$.
Then, setting
\begin{equation}\label{def.psi}
\tilde\psi(t,\tilde x)=\pi(\psi(t,x))
\end{equation}
for
each $x\in X_\xi$ and $t\ge 0$,
we have that
$$\tilde\psi:\R^+\times \pi(X_\xi )\to \pi(X_\xi)$$
 is well defined and
obviously satisfies for all $t\ge0$
\begin{equation}\label{eq.conjuga2}
\tilde\psi_t\circ \pi\vert_{X_\xi} = \pi\circ \psi_t\vert_{X_\xi}.
\end{equation}
In what follows we will show that $\tilde\psi$ is continuous.

\begin{lemma}\label{le.continuity}
Assume that  $\tau^*$ is continuous and $I(D)\cap D=\emptyset$. Then
$\pi\circ\psi_t\vert_{X_\xi}$ is continuous for all $t\ge 0$.
\end{lemma}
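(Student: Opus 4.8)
The plan is to prove sequential continuity: fix $t\ge 0$ and $x_0\in X_\xi$, take $x_k\to x_0$ in $X_\xi$, and show $\tilde d\big(\pi(\psi_t(x_k)),\pi(\psi_t(x_0))\big)\to 0$. Since $\tilde d(\tilde a,\tilde b)\le d(a,b)$, it suffices to exhibit representatives of the $\pi(\psi_t(x_k))$ converging in $X$ to a representative of $\pi(\psi_t(x_0))$. The guiding observation is that the only discontinuities of the map $t'\mapsto\psi_{t'}(x)$ occur at the impulse times $\tau_n(x)$, where the trajectory jumps from a point $p\in D$ to $I(p)$; since $p\sim I(p)$ (take $x=p$, $y=I(p)$ in the relation defining $\sim$), this jump collapses under $\pi$. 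By item (2) of Definition~\ref{def.admissible} consecutive impulse times differ by at least $\eta$, so only finitely many impulse times of $x_0$ lie in $[0,t]$, and the argument proceeds by a finite case analysis on the position of $t$ relative to them.

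The technical core is the continuity, at $x_0$, of the iterated impulse data. Write $R(x)=I(\varphi_{\tau_1(x)}(x))\in I(D)$ for the state immediately after the first impulse, let $R^{(n)}$ denote its $n$-fold iterate, and let $p_n(x)\in D$ be the impulse points, so that $R^{(n)}(x)=I(p_n(x))$ and $\tau_{n+1}(x)=\tau_n(x)+\tau_1(R^{(n)}(x))$. First I would record that $\tau_1=\tau^*$ on $X_\xi$ is continuous, whence $\tau_1(x_k)\to\tau_1(x_0)$ and, by joint continuity of $\varphi$ and continuity of $I$, $R(x_k)\to R(x_0)$ \emph{honestly} in $X$. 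Iterating, and using that $I(D)\cap D=\emptyset$ to keep the iterates $R^{(n)}(x)$ inside the region where $\tau^*$ is continuous, I would establish by induction that each $\tau_n$ and each $R^{(n)}$ is continuous at $x_0$ (finite values handled directly, the value $+\infty$ via continuity of $\tau^*$ into $[0,+\infty]$). The \textbf{main obstacle lies exactly here}: one must verify that the points $R^{(n)}(x)$, for $x$ near $x_0$, remain in the domain $X_\xi\cup D$ on which $\tau^*$ is assumed continuous, so that the recursion defining $\tau_{n+1}$ stays within the continuity hypothesis.

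With this in hand I would split into two regimes. If $t$ is not an impulse time of $x_0$, say $\tau_N(x_0)<t<\tau_{N+1}(x_0)$ with $\tau_0\equiv 0$, then continuity of $\tau_N$ and $\tau_{N+1}$ provides a neighborhood on which $\tau_N(x)<t<\tau_{N+1}(x)$, whence $\psi_t(x)=\varphi_{t-\tau_N(x)}(R^{(N)}(x))$ is honestly continuous at $x_0$, and so is $\pi\circ\psi_t$. If instead $t=\tau_N(x_0)$ is an impulse time, then $\psi_t(x_0)=I(p_N(x_0))$ with $p_N(x_0)\in D$, and I would separate the sequence into those $x_k$ with $\tau_N(x_k)>t$, for which $\psi_t(x_k)=\varphi_{t-\tau_{N-1}(x_k)}(R^{(N-1)}(x_k))\to p_N(x_0)\in D$, and those with $\tau_N(x_k)\le t$, for which $\psi_t(x_k)=\varphi_{t-\tau_N(x_k)}(R^{(N)}(x_k))\to I(p_N(x_0))$, since $t-\tau_N(x_k)\to 0$ while $\tau_{N+1}(x_k)\ge\tau_N(x_k)+\eta>t$ for large $k$. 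The two honest limits $p_N(x_0)$ and $I(p_N(x_0))$ differ in $X$ but satisfy $p_N(x_0)\sim I(p_N(x_0))$, so both project to $\pi(\psi_t(x_0))$; applying $\tilde d\le d$ along each subsequence yields $\pi(\psi_t(x_k))\to\pi(\psi_t(x_0))$. This dichotomy, together with the honest-continuity regime, proves continuity of $\pi\circ\psi_t\vert_{X_\xi}$ at every $x_0\in X_\xi$, and hence the lemma.
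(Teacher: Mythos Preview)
Your proposal is correct and follows essentially the same approach as the paper: both exploit the continuity of $\tau^*$, $\varphi$, and $I$ to propagate control through successive impulse times, and both handle the critical moment $t=\tau_N(x_0)$ by noting that the two possible limit points $p_N(x_0)\in D$ and $I(p_N(x_0))$ are identified under $\pi$. The only organizational difference is that you first establish continuity of the maps $\tau_n$ and $R^{(n)}$ at $x_0$ and then do a single case split on $t$, whereas the paper argues inductively segment by segment with subcases on the sign of $\tau_1(x)-\tau_1(y)$; for the obstacle you flag, note that $R^{(n)}(x)\in I(D)\subset X_\xi$ requires not only $I(D)\cap D=\emptyset$ but also $I(D)\cap D_\xi=\emptyset$, which the paper secures via the standing choice $\xi<a/2$ with $a=\operatorname{dist}(D,I(D))$.
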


\begin{proof}
Given $t>0$,  let us prove the continuity of $\pi\circ\psi_t\vert_{X_\xi}$ at any point $x\in X_\xi$. By an inductive argument on the impulsive times of $x$, it is enough to show that, when $y\in X_\xi$ is close to $x$, then $\pi(\psi_s(y))$ remains close to $\pi(\psi_s(x))$ for all $0\le s\le \tau_1(x)$. Notice that such an inductive argument on the impulsive times can be  applied because we are sure that $I(D)\subset X_\xi \setminus D$. The proof follows according to several cases:\\
\smallskip

\noindent\emph{Case 1}. $\tau_1(x)>t$.
\smallskip

\noindent As $\tau^*$ is continuous and $\tau_1$ coincides with $\tau^*$ in $X_\xi$, we must have  $\tau_1(y)>t$ for any point $y\in X_\xi$ sufficiently close to $x$. Therefore, the result follows in this case from the continuity of the semiflow $\varphi$.

\smallskip
\noindent\emph{Case 2}.  $\tau_1(x)  \le  t$.
\smallskip

\noindent
 Given $y\in X_\xi$ sufficiently close to $x$, by the continuity of the semiflow $\varphi$ the $\psi$-trajectories of $x$ and $y$ remain close until one of them hits the set $D$. At this moment the impulsive function acts and, therefore, their $\psi$-trajectories may not remain close at this first impulsive time. Now we distinguish three possible subcases:

\smallskip
\noindent\emph{Subcase 2.1}.  $\tau_1(x)=\tau_1(y)$.
\smallskip

\noindent
 The continuous map $I$ keeps the points $I(\varphi_{\tau_1(x)}(x)) $ and $ I(\varphi_{\tau_1(x)}(y))$ close, and this implies that  $\psi_s(x)$ and $\psi_s(y)$  remain close for all $0\le s\le \tau_1(x)$.

\smallskip
\noindent\emph{Subcase 2.2}.  $\tau_1(x)< \tau_1(y)$.
\smallskip

\noindent  By the continuity of $\varphi$ we have $\varphi_s(y)$ close to $\varphi_s(x)$ for~$y$ sufficiently close to~$x$ and $0\le s\le\tau_1(x)$.  This in particular implies that $\psi_s(y)$ is close to $\psi_s(x)$  for $0\le s<\tau_1(x)$. It remains to check that $\pi(\psi_{\tau_1(x)}(y))$  is close to $\pi(\psi_{\tau_1(x)}(x))$. This is clearly true because
 $\varphi_{\tau_1(x)}(y)$ is close to  $\varphi_{\tau_1(x)}(x)$, and so
 $$\pi(\psi_{\tau_1(x)}(y))=\pi(\varphi_{\tau_1(x)}(y))$$ is close to
 $$\pi(\varphi_{\tau_1(x)}(x)) =\pi(I(\varphi_{\tau_1(x)}(x)))=\pi(\psi_{\tau_1(x)}(x)).$$

\smallskip
\noindent\emph{Subcase 2.3}.  $\tau_1(x)> \tau_1(y)$.
\smallskip

\noindent
Again, by the continuity of $\varphi$, we have %$\varphi_s(y)$ close to $\varphi_s(y)$ for~$y$ sufficiently close to~$x$ and $0\le s<\tau_1(y)$. This in particular implies that
$\psi_s(y)$ is close to $\psi_s(x)$  for $0\le s<\tau_1(y)$. We are left to verify that $\pi(\psi_{s}(y))$  is close to $\pi(\psi_{s}(x))$ for $\tau_1(y)\le s\le \tau_1(x)$.

By the definition of  first impulsive time we have $\varphi_{\tau_1(y)}(y) \in D$; so, as we are assuming that $I(D)\cap D= \emptyset$ and have chosen $\xi < a/2$, we know that
$\psi_{\tau_1(y)}(y)=I(\varphi_{\tau_1(y)}(y)) \in X_\xi,$ which, by \eqref{eq.rho}, yields
$$\tau_1(\psi_{\tau_1(y)}(y))\geq \rho.$$
Using that $\tau^*$ is continuous at~$x$, we have
$\tau^*(x)-\tau^*(y) $ small for $y$ close to $x$; we may ensure, in particular, that
$$\tau^*(x)-\tau^*(y) <\rho.$$
Hence, for $\tau_1(y)\le s\le\tau_1(x)$, we have
$$\psi_s(y)=\varphi_{s-\tau_1(y)}(\psi_{\tau_1(y)}(y))=\varphi_{s-\tau_1(y)}(I(\varphi_{\tau_1(y)}(y))).$$
Observing that $s-\tau_1(y)\le \tau_1(x)-\tau_1(y)$ is close to $0$ for $y$ close to $x$, we have
$$\varphi_{s-\tau_1(y)}(I(\varphi_{\tau_1(y)}(y)))\quad\text{ close to }\quad I(\varphi_{\tau_1(y)}(y)).$$
Hence for $\tau_1(y)\le s\le\tau_1(x)$ we have
$$\pi(\psi_s(y))\quad\text{ close to }\quad\pi(I(\varphi_{\tau_1(y)}(y)))=\pi(\varphi_{\tau_1(y)}(y)).$$
Now we just need to notice that,  for $\tau_1(y)\le s\le\tau_1(x)$,  we have
$\varphi_{\tau_1(y)}(y)$ close to
$\varphi_{s}(y)$ which is itself close to $\varphi_{s}(x)$.
This way, we get, for $\tau_1(y)\le s\le\tau_1(x)$,
 $$\pi(\varphi_{\tau_1(y)}(y)) \quad\text{ close to }\quad  \pi(\varphi_{s}(x))= \pi(\psi_{s}(x)).$$
Lastly, recall that for $s=\tau_1(x)$ we have $  \pi(\varphi_{\tau_1(x)}(x))=\pi(I(\varphi_{\tau_1(x)}(x)))= \pi(\psi_{\tau_1(x)}(x)). $
\end{proof}

\begin{proposition}\label{prop.continuity}
The semiflow  $\tilde\psi:\R^+_0 \times \pi(X_\xi)\to \pi(X_\xi)$ is continuous.
\end{proposition}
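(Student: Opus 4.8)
The plan is to deduce the joint continuity of $\tilde\psi$ from two separate ingredients: a \emph{uniform} equicontinuity of the quotient orbits in time, and the continuity of each time-$t_0$ map $\tilde\psi_{t_0}$ on $\pi(X_\xi)$. Granting both, continuity of $\tilde\psi$ at a point $(t_0,\tilde x_0)$ will follow from the triangle inequality
$$\tilde d\big(\tilde\psi_t(\tilde x),\tilde\psi_{t_0}(\tilde x_0)\big)\le \tilde d\big(\tilde\psi_t(\tilde x),\tilde\psi_{t_0}(\tilde x)\big)+\tilde d\big(\tilde\psi_{t_0}(\tilde x),\tilde\psi_{t_0}(\tilde x_0)\big),$$
where the first term is controlled, \emph{uniformly in} $\tilde x$, by the time-equicontinuity, and the second by the continuity of $\tilde\psi_{t_0}$ at $\tilde x_0$.

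For the time-equicontinuity I would show that, given $\alpha>0$, Lemma~\ref{le.alfabeta} yields $0<\beta<\eta$ such that
$$\tilde d\big(\pi\psi_s(x),\pi\psi_t(x)\big)<2\alpha\qquad\text{for all }x\in X_\xi\text{ and all }s,t\ge0\text{ with }|s-t|<\beta.$$
Since consecutive impulsive times of $x$ are at least $\eta$ apart (item (2) of Definition~\ref{def.admissible}) and $\beta<\eta$, the interval $[\min\{s,t\},\max\{s,t\}]$ contains at most one impulsive time $\tau_j(x)$. If it contains none, then $\psi_s(x)$ and $\psi_t(x)$ lie on a single $\varphi$-arc and Lemma~\ref{le.alfabeta} bounds $d(\psi_s(x),\psi_t(x))$, hence $\tilde d$, by $\alpha$. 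If it contains exactly one, let $p\in D$ be the pre-impulse point, so that $\psi_{\tau_j(x)}(x)=I(p)$; two applications of Lemma~\ref{le.alfabeta} give that $\psi_s(x)$ is $\alpha$-close to $p$ and $\psi_t(x)$ is $\alpha$-close to $I(p)$, and the jump is absorbed by the identification $\pi(p)=\pi(I(p))$, whence $\tilde d(\pi\psi_s(x),\pi\psi_t(x))<2\alpha$. The point is that this bound is independent of $x$.

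The harder ingredient is the continuity of $\tilde\psi_{t_0}$, and this is where I expect the main obstacle to lie: the bijection $\pi|_{X_\xi}$ is \emph{not} a homeomorphism, so the continuity of $\pi\circ\psi_{t_0}|_{X_\xi}$ furnished by Lemma~\ref{le.continuity} does not formally descend to $\pi(X_\xi)$. To circumvent this I would work on the compact set $X_\xi\cup D$, on which $\pi$ restricts to a continuous surjection onto the compact metric (hence Hausdorff) space $\pi(X_\xi)=\pi(X_\xi\cup D)$, see \eqref{eq.xxi}, and is therefore a closed, i.e.\ quotient, map. Consequently $\tilde\psi_{t_0}$ is continuous as soon as $F:=\tilde\psi_{t_0}\circ\pi|_{X_\xi\cup D}$ is, where $F(z)=\pi\psi_{t_0}(z)$ for $z\in X_\xi$ and $F(z)=\pi\psi_{t_0}(I(z))$ for $z\in D$; this $F$ is well defined because $\pi(z)=\pi(I(z))$ for $z\in D$, and on the open subset $X_\xi$ its continuity is precisely Lemma~\ref{le.continuity}.

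It remains to check the continuity of $F$ at a point $d_0\in D$, the genuinely delicate case. For $z\in D$ near $d_0$ this is immediate from the continuity of $I$ together with Lemma~\ref{le.continuity} applied at $I(d_0)\in I(D)\subset X_\xi$. For $z\in X_\xi$ approaching $d_0$ the difficulty is that $z$ and $d_0$ have different $X_\xi$-representatives; here I would exploit that $\tau^*$ is continuous with $\tau^*(d_0)=0$, so that $\tau_1(z)=\tau^*(z)\to 0$. Thus, for $t_0>0$ and $z$ close enough, the first impulse of $z$ has occurred by time $t_0$ and $\psi_{t_0}(z)=\psi_{t_0-\tau_1(z)}(y_z)$ with $y_z=I(\varphi_{\tau_1(z)}(z))\in I(D)\subset X_\xi$ converging to $I(d_0)$. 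Combining the time-equicontinuity of the second paragraph, applied to the times $t_0-\tau_1(z)$ and $t_0$, with the continuity of $\pi\psi_{t_0}|_{X_\xi}$ at $I(d_0)$, one obtains $F(z)=\pi\psi_{t_0}(z)\to\pi\psi_{t_0}(I(d_0))=F(d_0)$; the case $t_0=0$ is simpler, following at once from $\tilde d(\pi z,\pi d_0)\le d(z,d_0)$. This establishes the continuity of $F$, hence of $\tilde\psi_{t_0}$, and together with the time-equicontinuity completes the proof of joint continuity.
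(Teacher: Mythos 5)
Your proof is correct, and it takes a genuinely more careful route than the paper's. The paper also splits the problem into continuity in $t$ and continuity in $\tilde x$: for fixed $\tilde x$ it checks continuity of $t\mapsto\tilde\psi^{\tilde x}(t)$ separately at impulsive and non-impulsive times, using the identification $\pi(p)=\pi(I(p))$ to absorb the jump exactly as you do; for fixed $t$ it reduces continuity of $\tilde\psi_t$ to that of $\tilde\psi_t\circ\pi\vert_{X_\xi}$, which is Lemma~\ref{le.continuity}; and it then asserts that these two separate continuities suffice. Your argument tightens this in two places. First, you upgrade the $t$-continuity to an equicontinuity that is \emph{uniform} in $x$ (at most one impulse in a $\beta$-window, by item (2) of Definition~\ref{def.admissible}), which lets you obtain joint continuity directly from the triangle inequality instead of appealing to the non-obvious fact that a separately continuous semiflow on a compact metric space is jointly continuous. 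Second, you note that $\pi\vert_{X_\xi}$ is \emph{not} a quotient map (a bijective quotient map would be a homeomorphism, which the paper itself rules out), so the universal property must be applied to the closed quotient map $\pi\vert_{X_\xi\cup D}$ on the saturated compact set $X_\xi\cup D$; you then supply the verification, absent from the paper, that $\tilde\psi_{t_0}\circ\pi$ is continuous at points of $D$, using the continuity of $\tau^*$ together with $\tau^*(d_0)=0$ and Lemma~\ref{le.continuity} at $I(d_0)\in I(D)\subset X_\xi$. Both proofs ultimately rest on Lemmas~\ref{le.alfabeta} and~\ref{le.continuity} and on the identification $\pi(p)=\pi(I(p))$ for $p\in D$, so yours can be read as a rigorous completion of the paper's outline rather than a departure from it.
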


\begin{proof}

Considering for each $\tilde x \in \pi(X_\xi)$ the map $\tilde\psi^{\tilde x}: \R^+_0\to \pi(X_\xi)$ defined by
 $$\tilde\psi^{\tilde x}(t)=\tilde\psi(t,\tilde x),$$
it is enough to prove that $\tilde\psi^{\tilde x}$ and $\tilde\psi_t$ are continuous for all $\tilde x\in \pi(X_\xi)$ and all $t\ge0$.

Let  us start by proving the continuity of $\tilde\psi^{\tilde x}$ for   $x\in X_\xi$. Take first $t_0\ge0$  which is not an impulsive time for $x$. In this case we have, for $t$ in a sufficiently small neighborhood of $t_0$ in $\R^+_0$,
 $$\tilde\psi^{\tilde x}(t)=\pi(\varphi(t,x))$$
and, as $\varphi$ is continuous, this obviously gives the continuity of $\tilde\psi^{\tilde x}$ at $t_0$. On the other hand, if $t_0$ is an impulsive time for $x$, then we have
 $$\lim_{t\to t_0^-}\tilde\psi^{\tilde x}(t)=\lim_{t\to t_0^-}\pi(\psi(t,x))=\lim_{t\to t_0^-}\pi(\varphi(t,x))
= \pi(\varphi(t_0,x)).$$
 As $\varphi(t_0,x)\in D$,  it follows from the definition of $\psi(t_0,x)$  and the equivalence relation that yields the projection $\pi$ that
   $$ \pi(\varphi(t_0,x))=\pi(I(\varphi(t_0,x)))=\pi(\psi(t_0,x))=\tilde\psi^{\tilde x}(t_0).$$
This gives the continuity of $\tilde\psi^{\tilde x}$ on the left hand side of $t_0$. The continuity on the right hand side of $t_0$ follows easily from the fact that, by definition, the impulsive trajectories  are continuous on the right hand side.

Let us now prove the continuity of $\tilde\psi_t$ for $t\ge0$. Notice that as we are considering the quotient topology in $\pi(X_\xi)$, we know that $\tilde\psi_t$ is
continuous if and only if $\tilde\psi_t\circ\pi\vert_{X_\xi}$ is continuous. The continuity of $\tilde\psi_t\circ\pi\vert_{X_\xi}$ is an immediate consequence of Lemma~\ref{le.continuity} and \eqref{eq.conjuga2}.
\end{proof}

\subsection{Proof of Theorem~\ref{teo.modelocontinuo}}
We take $\widetilde X=\pi(X_\xi)$, the semiflow $\tilde\psi$ as given in~\eqref{def.psi} and the map $h: X_\xi\to \widetilde X$ given by $h(x)=\pi(x)$ for all $x\in X_\xi$. It follows from \eqref{eq.conjuga2} and Proposition~\ref{prop.continuity} that $\tilde\psi_t\circ h = h\circ \psi_t$ for all $t\ge 0$. Thus, we are left to prove that
$$h^\tau_{\rm{top}}(\psi)=h_{\rm{top}}(\tilde\psi).$$

Let  $f:X_\xi\to \pi(X_\xi)$ be given by $f(x)=\pi(x)$. The map $f$  is a continuous bimeasurable (see \cite{P66}) bijection, so, using it, we define admissible functions $\tilde\tau$ and $\tilde\tau'$ for the semiflow $\tilde\psi$ as
$$\tilde\tau(f(x))=\tau(x) \quad \text{and} \quad \tilde\tau'(f(x))=\tau'(x).$$
Notice that $f$ is  a $(\tau,\tilde\tau)$-semiconjugacy between $\psi_{|_{X_\xi}}$ and $\tilde\psi$.

As  $I^{-1}(D)=\emptyset$, we have, for all $x \in D$,
$$\pi^{-1}(\{\pi(x)\})=\{x, I(x)\}\cup I^{-1}(\{I(x)\}).$$
So, we may restrict $f$ to $X_\xi\setminus I(D)$ and define the map
$$g:X_\xi\setminus I(D) \to \pi(X_\xi)\setminus \pi(D), \quad g(x)=f(x).$$
Observe that, as $g$ is a restriction of $\pi$ and $\pi$ is uniformly continuous on $X$, then $g$ is uniformly continuous as well.

\begin{lemma}\label{le.proj}  If $I$ is 1-Lipschitz, then $g^{-1}$ is uniformly continuous.
\end{lemma}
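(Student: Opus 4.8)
The plan is to show that $g^{-1}$ is in fact Lipschitz with constant $2$, from which uniform continuity follows at once. Concretely, I would aim to prove that for all $x,y \in X_\xi \setminus I(D)$,
$$d(x,y) \le 2\,\tilde d(\pi(x),\pi(y)),$$
and then, given $\epsilon>0$, take $\gamma=\epsilon/2$: whenever $\tilde d(\tilde x,\tilde y)<\gamma$ one gets $d(g^{-1}(\tilde x),g^{-1}(\tilde y))<\epsilon$.

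The key step is to determine the equivalence classes of the points of $X_\xi \setminus I(D)$. I claim that for each such $x$ the class $\pi^{-1}(\{\pi(x)\})$ reduces to the singleton $\{x\}$. Indeed, $x\in X_\xi$ forces $x\notin D$ (since $X_\xi = X\setminus(D_\xi\cup D)$), while $x\notin I(D)$ by the choice of domain. Inspecting the four relations defining $\sim$: the relation $z=I(x)$ and the relation $I(x)=I(z)$ both require $x\in D$, which fails; and the relation $x=I(z)$ requires $x\in I(D)$, which also fails. Hence the only point directly related to $x$ is $x$ itself, and since $\sim$ is generated by these elementary relations, an induction along any connecting chain shows that the whole class is $\{x\}$. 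Here the hypothesis $I(D)\cap D=\emptyset$ (equivalently $I^{-1}(D)=\emptyset$), together with the excision of $I(D)$ from the domain, is exactly what guarantees that a point of $X_\xi\setminus I(D)$ has no nontrivial partners.

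With the classes identified, the bound follows immediately from Lemma~\ref{le.chain}. Since $I$ is $1$-Lipschitz, that lemma provides, for any $\tilde x,\tilde y\in\pi(X)$, representatives $p\sim x$ and $q\sim y$ with $d(p,q)\le 2\,\tilde d(\tilde x,\tilde y)$. Applying it to $x=g^{-1}(\tilde x)$ and $y=g^{-1}(\tilde y)$ in $X_\xi\setminus I(D)$, and using that their equivalence classes are singletons, we must have $p=x$ and $q=y$, whence
$$d(x,y)\le 2\,\tilde d(\tilde x,\tilde y),$$
which is the desired Lipschitz estimate for $g^{-1}$.

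The only genuinely delicate point is the singleton-class claim; everything else is a direct appeal to Lemma~\ref{le.chain}. I expect the main care to be needed in verifying that no chain of elementary relations can lead out of the singleton, which is precisely where both the disjointness $I(D)\cap D=\emptyset$ and the removal of $I(D)$ from the domain of $g$ are used.
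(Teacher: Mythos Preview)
Your proposal is correct and follows essentially the same approach as the paper: the paper's proof likewise notes that every point of $X_\xi\setminus I(D)$ has a singleton equivalence class and then invokes Lemma~\ref{le.chain} to obtain the inequality $d(x,y)\le 2\,\tilde d(\tilde x,\tilde y)$, hence uniform continuity of $g^{-1}$. Your write-up simply spells out in more detail the verification of the singleton-class claim that the paper states in one clause.
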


\begin{proof}
As in $X_\xi \setminus I(D)$ each equivalence class in $X$ has only one member, we deduce from Lemma~\ref{le.chain} that, for all $\tilde{x},\tilde{y} \in \pi(X_\xi)\setminus \pi(D)$, we have
$$d(x,y)\leq 2\,\tilde d\,(\tilde x, \tilde y).$$
This in turn implies that $g^{-1}$ is uniformly continuous.
\end{proof}

After Lemma~\ref{le.proj}, we define
$$ \tilde\tau'(g(x))=\tau'(x)$$
and this way $g$ is a $(\tau',\tilde\tau')$-semiconjugacy between $\psi_{|_{X_\xi\setminus I(D)}}$ and $\tilde\psi_{|_{\pi(X_\xi)\setminus \pi(D)}}$,
and $g^{-1}$ is a $(\tilde\tau',\tau')$-semiconjugacy between $\tilde\psi_{|_{\pi(X_\xi)\setminus \pi(D)}}$ and $\psi_{|_{X_\xi\setminus I(D)}}$.

\begin{lemma}\label{le.equal3} $h^{\tilde\tau'}_{\rm{top}}(\tilde\psi)=h_{\rm{top}}^{\tau'}(\psi_{|_{X_\xi}}).$
\end{lemma}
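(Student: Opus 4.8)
The plan is to obtain the equality by proving the two inequalities separately, feeding in each case an appropriate semiconjugacy into Lemma~\ref{le.invariant}.

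For the inequality $h^{\tau'}_\top(\psi_{|_{X_\xi}})\ge h^{\tilde\tau'}_\top(\tilde\psi)$ I would use the global map $f$. Since $f=\pi|_{X_\xi}$ is a uniformly continuous bijection (injective because $x\sim y$ with $x,y\in X_\xi$ forces $x=y$, and uniformly continuous as a restriction of $\pi$) satisfying $\tilde\psi_t\circ f=f\circ\psi_t$ and $\tilde\tau'(f(x))=\tau'(x)$, it is a finite-to-one (indeed one-to-one) $(\tau',\tilde\tau')$-semiconjugacy between $\psi_{|_{X_\xi}}$ and $\tilde\psi$. Lemma~\ref{le.invariant} then delivers this inequality immediately.

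For the reverse inequality $h^{\tilde\tau'}_\top(\tilde\psi)\ge h^{\tau'}_\top(\psi_{|_{X_\xi}})$ one cannot simply invoke $f^{-1}$, because $f^{-1}$ need not be uniformly continuous; this is exactly the obstruction that forced the passage to $X_\xi\setminus I(D)$ and is the main difficulty here. Instead I would route through the well-behaved inverse $g^{-1}$ provided by Lemma~\ref{le.proj}. As $g^{-1}$ is a finite-to-one $(\tilde\tau',\tau')$-semiconjugacy between $\tilde\psi_{|_{\pi(X_\xi)\setminus\pi(D)}}$ and $\psi_{|_{X_\xi\setminus I(D)}}$, Lemma~\ref{le.invariant} yields
$$h^{\tilde\tau'}_\top\big(\tilde\psi_{|_{\pi(X_\xi)\setminus\pi(D)}}\big)\ge h^{\tau'}_\top\big(\psi_{|_{X_\xi\setminus I(D)}}\big).$$
It then remains to replace each restricted system by the corresponding global one. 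On the left, enlarging the space from $\pi(X_\xi)\setminus\pi(D)$ to $\pi(X_\xi)$ only enlarges the family of admissible separated sets, so $h^{\tilde\tau'}_\top(\tilde\psi)\ge h^{\tilde\tau'}_\top(\tilde\psi_{|_{\pi(X_\xi)\setminus\pi(D)}})$ follows at once from the very definition of $s^{\tilde\tau'}$. On the right, Remark~\ref{re.setminus} together with Lemma~\ref{le.equal2} gives $h^{\tau'}_\top(\psi_{|_{X_\xi\setminus I(D)}})=h^{\tau'}_\top(\psi)=h^{\tau'}_\top(\psi_{|_{X_\xi}})$. Chaining these three steps produces $h^{\tilde\tau'}_\top(\tilde\psi)\ge h^{\tau'}_\top(\psi_{|_{X_\xi}})$, and combined with the first inequality this closes the proof.

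The step I expect to require most care is the organization just described: the honest comparison of $\tilde\psi$ with $\psi_{|_{X_\xi}}$ is available on the full spaces only through $f$, whose inverse is not uniformly continuous, whereas the uniformly continuous conjugacy $g^{-1}$ lives only after deleting the ``impulse landing'' sets $I(D)$ and $\pi(D)=\pi(I(D))$. The whole weight of the argument therefore falls on the removal results of the previous section, which guarantee that discarding these transverse sets leaves the $\tau'$-topological entropy unchanged; once those are in hand, the two inequalities are purely formal consequences of Lemma~\ref{le.invariant} and the monotonicity of the separated-set count under shrinking the phase space.
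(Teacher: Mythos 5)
Your proposal is correct and follows essentially the same route as the paper: the inequality $h^{\tilde\tau'}_{\rm{top}}(\tilde\psi)\leq h^{\tau'}_{\rm{top}}(\psi_{|_{X_\xi}})$ via Lemma~\ref{le.invariant} applied to $f$, and the reverse inequality via the semiconjugacy $g^{-1}$ on the deleted spaces together with Remark~\ref{re.setminus} (and Lemma~\ref{le.equal2}) and monotonicity of the entropy under restriction. Your version is in fact slightly more explicit than the paper's, since you spell out the chaining of Remark~\ref{re.setminus} with Lemma~\ref{le.equal2} that the paper leaves implicit.
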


\begin{proof} From  Lemma~\ref{le.invariant} applied to the semiconjugacy $f:X_\xi \to \pi(X_\xi)$ we deduce that
$$h^{\tilde\tau'}_{\rm{top}}(\tilde\psi)\leq h^{\tau'}_{\rm{top}}(\psi_{|_{X_\xi}}).$$

Conversely, Lemma~\ref{le.proj} ensures that the map $g^{-1}$ is a $(\tilde\tau',\tau')$-semiconjugacy between $\tilde\psi_{|_{\pi(X_\xi)\setminus \pi(D)}}$ and $\psi_{|_{X_\xi\setminus I(D)}}$, and so we get from Lemma~\ref{le.invariant}
$$h^{\tilde\tau'}_{\rm{top}}(\tilde\psi{|_{\pi(X_\xi)\setminus \pi(D)}})\geq h^{\tau'}_{\rm{top}}(\psi_{|_{X_\xi \setminus I(D)}}).$$
As $h^{\tau'}_{\rm{top}}(\psi_{|_{X_\xi \setminus I(D)}})=h^{\tau'}_{\rm{top}}(\psi_{|_{X_\xi}})$ (see Remark~\ref{re.setminus}), we conclude that
$$h^{\tilde\tau'}_{\rm{top}}(\tilde\psi)\geq h^{\tilde\tau'}_{\rm{top}}(\tilde\psi{|_{\pi(X_\xi)\setminus \pi(D)}})\geq h^{\tau'}_{\rm{top}}(\psi_{|_{X_\xi \setminus I(D)}})=h^{\tau'}_{\rm{top}}(\psi_{|_{X_\xi}}) .$$
\end{proof}

\begin{lemma}\label{le.entropy}
$h_{\rm{top}}(\tilde\psi)=h^{\tau'}_{\rm{top}}(\psi).$
\end{lemma}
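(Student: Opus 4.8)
The plan is to chain together the identities already established, so that the whole statement reduces to a single application of Theorem~\ref{teo.coincide} to the \emph{continuous} semiflow $\tilde\psi$. First I would check that $\tilde\psi$ meets the hypotheses of Theorem~\ref{teo.coincide}: by Proposition~\ref{prop.continuity} it is a continuous semiflow, by the discussion preceding Lemma~\ref{le.continuity} its phase space $\widetilde X=\pi(X_\xi)$ is a compact metric space, and $\tilde\tau'$ was introduced as an admissible function on $\widetilde X$. Hence Theorem~\ref{teo.coincide} applies and gives
$$h^{\tilde\tau'}_{\rm{top}}(\tilde\psi)=h_{\rm{top}}(\tilde\psi).$$

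Next I would invoke the two equalities that transport the $\tau'$-entropy between the quotient and the various restrictions of $\psi$. Lemma~\ref{le.equal3} yields $h^{\tilde\tau'}_{\rm{top}}(\tilde\psi)=h^{\tau'}_{\rm{top}}(\psi_{|_{X_\xi}})$, and Lemma~\ref{le.equal2} yields $h^{\tau'}_{\rm{top}}(\psi_{|_{X_\xi}})=h^{\tau'}_{\rm{top}}(\psi)$. Concatenating these with the previous display produces
$$h_{\rm{top}}(\tilde\psi)=h^{\tilde\tau'}_{\rm{top}}(\tilde\psi)=h^{\tau'}_{\rm{top}}(\psi_{|_{X_\xi}})=h^{\tau'}_{\rm{top}}(\psi),$$
which is precisely the asserted identity.

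Since every ingredient is already available, I do not expect a real obstacle; the only point demanding a little care is verifying that Theorem~\ref{teo.coincide} is being applied to the continuous $\tilde\psi$ on the compact metric space $\widetilde X$ (and not to the original discontinuous $\psi$), that is, that the quotient construction has genuinely produced a continuous semiflow on a compact metric space equipped with an admissible time function. All three of these facts were settled earlier — continuity in Proposition~\ref{prop.continuity}, compactness in the paragraph before Lemma~\ref{le.continuity}, and admissibility when $\tilde\tau'$ was defined — so the lemma follows simply by assembling the preceding results.
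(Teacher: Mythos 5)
Your proposal is correct and follows exactly the same route as the paper's proof: Theorem~\ref{teo.coincide} applied to the continuous semiflow $\tilde\psi$, chained with Lemma~\ref{le.equal3} and Lemma~\ref{le.equal2}. The extra care you take in checking the hypotheses of Theorem~\ref{teo.coincide} (continuity, compactness, admissibility of $\tilde\tau'$) is implicit in the paper but entirely consistent with it.
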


\begin{proof}
Firstly, by Theorem~\ref{teo.coincide} and Lemma~\ref{le.equal3} we get
$$h_{\rm{top}}(\tilde\psi)= h^{\tilde\tau'}_{\rm{top}}(\tilde\psi)=h_{\rm{top}}^{\tau'}(\psi_{|_{X_\xi}}).$$
Then, by Lemma~\ref{le.equal2} we obtain
$$h^{\tau'}_{\rm{top}}(\psi_{|_{X_\xi}})= h^{\tau'}_{\rm{top}}(\psi).$$
\end{proof}

To conclude the proof of Theorem~\ref{teo.modelocontinuo}, we have just to notice that from Lemma~\ref{le.equal1} we get
$$h^{\tau}_{\rm{top}}(\psi)=h^{\tau'}_{\rm{top}}(\psi)$$
and by Lemma~\ref{le.entropy} we deduce that
$$h^{\tau'}_{\rm{top}}(\psi)=h_{\rm{top}}(\tilde\psi).$$

\subsection{Proof of Theorem~\ref{teo.var.principle}}

Firstly, Theorem~\ref{teo.modelocontinuo} ensures that
$$h^{\tau}_{\rm{top}}(\psi)=h_{\rm{top}}(\tilde\psi).$$
Additionally, by Proposition~\ref{prop.continuity}, we may apply the Variational Principle \cite{BR75,W82} to $\tilde\psi$, getting
$$h_{\rm{top}}(\tilde\psi)=\sup\,\{h_\nu(\tilde\psi_1): \nu \in \mathcal{M}_{\tilde\psi}(\pi(X_\xi))\}.$$
To conclude the proof of Theorem~\ref{teo.var.principle}, we are due to connect the measure theoretical information of $\tilde\psi$ to the corresponding one of $\psi$, and to ascertain that we may replace $X_\xi$ by $X$ in the previous equality. Accordingly, we will start verifying that the space restriction $X_\xi=X\setminus (D_\xi\cup D)$ is negligible within the measure theoretical context we are dealing with.

\begin{lemma}\label{le.naoerrante} Let $\mu$ be a probability measure invariant by the semiflow $\psi$.
\begin{itemize}
\item[(a)] If $I(D) \cap D)=\emptyset$, then $\mu(D)=0$.
\item[(b)] If $I(D)\cap D=\emptyset$ and $D$ satisfies a half-tube condition, then $\mu(D_\xi)=0$.
\end{itemize}
\end{lemma}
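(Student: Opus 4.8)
The plan is to prove each part by exploiting the invariance of $\mu$ together with the structural properties of the impulsive system, the key being a \emph{disjointness} argument that forces certain forward iterates of the relevant set to be pairwise disjoint, whence the measure of that set must vanish.

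For part (a), I would first observe that since $I(D)\cap D=\emptyset$ and $\tau_1$ is bounded below by $\eta>0$ on $D$ (admissibility with respect to $D$), every point of $D$ spends a definite positive amount of time outside $D$ immediately after being mapped by $I$. The idea is to show that for a suitable small $s>0$ the sets $\psi_s(D)$ and $D$ are disjoint, or more precisely that the backward-in-time preimages under $\psi$ of $D$ along a short time interval are disjoint from $D$. Concretely, for $0<s<\eta$ the set $\psi_{-s}$-type considerations are delicate because $\psi$ is only a semiflow, so instead I would argue forward: because $\psi_s(I(\varphi_{\tau_1}(\cdot)))$ leaves $D$ for small $s$ and cannot return before time $\eta$, one shows that the indicator sets $\{x: \psi_t(x)\in D\}$ for $t$ in a small interval $(0,\eta)$ are essentially disjoint in $t$. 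Integrating the invariance relation $\mu(D)=\mu(\psi_{-t}(D))$ over such an interval and using that these level sets fill a set of finite measure at most $1$ forces $\mu(D)\cdot(\text{length})\le 1$; letting the argument run shows $\mu(D)=0$. The cleanest route is: since $\tau_1\ge\eta$ on $D$, a point in $D$ is not in $D$ for times in $(0,\eta)$, so the first-return structure gives $\mu(\{x:\psi_t(x)\in D \text{ for some } t\in(0,\eta)\})\ge \eta^{-1}\int_0^\eta \mu(D)\,dt$ type estimates; combined with invariance this yields $\mu(D)=0$.

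For part (b), I would derive it from (a) together with the half-tube condition. The set $D_\xi=\bigcup_{x\in D}\{\varphi_t(x):0<t<\xi\}$ is, by item (3) of the half-tube condition (Definition~\ref{def.tubecondition}), swept out disjointly by the flow from $D$. The natural argument is to write $D_\xi$ as a flow-image of $D\times(0,\xi)$ and use invariance of $\mu$ under $\psi_t$ for $t\in(0,\xi)$, noting that on $D_\xi$ the impulsive semiflow coincides with $\varphi$ (no impulse occurs strictly inside the tube before reaching $D$ again, by item (2)). Since $\psi_t(D_\xi)$ for varying $t$ relates back to iterates of $D$, and $\mu(D)=0$ from part (a), a Fubini-type disintegration or a direct covering argument shows $\mu(D_\xi)\le C\,\mu(D)=0$ for an appropriate constant depending on $\xi$ and $\eta$. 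Alternatively, one covers $D_\xi$ by finitely many time-shifts $\psi_{-t_i}(U)$ of a neighborhood $U$ of $D$ and passes to the limit as $U\downarrow D$.

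The main obstacle I anticipate is the absence of a genuine inverse for $\psi$: because $\psi$ is only a semiflow (and moreover discontinuous), one cannot freely write $\mu(D)=\mu(\psi_{-t}(D))$, so the invariance must be applied in the correct direction, namely $\mu(A)=\mu(\psi_t^{-1}(A))$ for Borel $A$. The delicate point is to verify that the relevant preimages $\psi_t^{-1}(D)$ for different small $t$ are pairwise disjoint (or overlap in a controlled way), which is exactly where the lower bound $\tau_1\ge\eta$ on $D$ and the condition $I(D)\cap D=\emptyset$ must be used to guarantee that a trajectory cannot hit $D$ twice within a time window shorter than $\eta$. Making this disjointness precise, and checking that the sets involved are Borel so that the invariance identity applies, is the technical heart of the argument.
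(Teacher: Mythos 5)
The key observation you are missing --- and the one that makes the paper's proof essentially two lines --- is that the forward $\psi$-orbit of \emph{any} point never meets $D$ at any time $t>0$: the impulse fires exactly at the instant the $\varphi$-trajectory reaches $D$, so $\psi_{\tau_n(x)}(x)\in I(D)$, and since $I(D)\cap D=\emptyset$ we get $\psi_t(x)\notin D$ for all $t>0$ and all $x\in X$. Likewise, by item (2) of the half-tube condition a $\varphi$-trajectory can only enter $D_\xi$ after first crossing $D$, at which instant $\psi$ redirects it into $I(D)\subset X_\xi$; hence $\psi_t(x)\notin D_\xi$ for every $t>0$ unless $x$ itself lies in $D$. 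The paper then simply invokes the Poincar\'e Recurrence Theorem: if $\mu(D)>0$ (resp.\ $\mu(D_\xi)>0$), then $\mu$-almost every point of $D$ (resp.\ of $D_\xi$) would have to return to it infinitely often, which no point ever does. Equivalently, $\psi_t^{-1}(D)=\emptyset$ and $\psi_t^{-1}(D_\xi)\subseteq D$ for every $t>0$, so invariance gives $\mu(D)=\mu(\psi_1^{-1}(D))=0$ and $\mu(D_\xi)=\mu(\psi_1^{-1}(D_\xi))\le\mu(D)=0$. Your concern with hitting times separated by $\eta$ and with Fubini in the time variable is solving a harder problem than the one posed: that machinery would be needed if trajectories genuinely crossed $D$ (as for a transversal section of a continuous flow), but here they never land on it.

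As written, both halves of your argument have gaps. In (a), the displayed inequality $\mu(\{x:\psi_t(x)\in D\text{ for some }t\in(0,\eta)\})\ge\eta^{-1}\int_0^\eta\mu(D)\,dt$ is the trivial direction and yields only $\mu\bigl(\bigcup_{t}\psi_t^{-1}(D)\bigr)\ge\mu(D)$, from which nothing follows; the Fubini route can be repaired (one needs $\eta\,\mu(D)=\int_0^\eta\mu(\psi_t^{-1}(D))\,dt=\int_X \mathrm{Leb}\{t\in(0,\eta):\psi_t(x)\in D\}\,d\mu(x)=0$, because each fibre is finite), but you never write the inequality in the usable direction. In (b), the step $\mu(D_\xi)\le C\,\mu(D)$ via ``flow-image of $D\times(0,\xi)$'' or a covering by sets $\psi_{-t_i}(U)$ does not survive the non-invertibility that you yourself flag as the main obstacle: invariance controls $\mu(\psi_t^{-1}(A))$, not $\mu(\psi_t(A))$, and in fact $\mu(\psi_t(D))\ge\mu(D)$ in general, so no upper bound on $\mu(D_\xi)$ can come from pushing $D$ forward. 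The inequality you want is true with $C=1$, but only through the preimage containment $\psi_t^{-1}(D_\xi)\subseteq D$, which requires exactly the observation above (item (2) of the half-tube condition together with $I(D)\cap(D\cup D_\xi)=\emptyset$) and which your proposal never establishes.
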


\begin{proof}
Supposing that $\mu(D)>0$, it follows from Poincar\'e Recurrence Theorem that for $\mu$ almost every $x\in D$ there are infinitely many moments $t>0$ such that $\psi_t(x)\in  D.$ Clearly, if  $I(D)\cap D=\emptyset$, then the $\psi$-trajectories do not hit $D$ for $t>0$, and so we arrive at a contradiction.

Assume that $\mu(D_\xi)>0$. Then, for $\mu$ almost every $x \in D_\xi$, there are infinitely many times $t>0$ such that $\psi_t(x) \in D_\xi$. Yet, as $I(D)\cap D=\emptyset$ and $D$ satisfies a half-tube condition (see item (2) of Definition~\ref{def.tubecondition}), no $\varphi$-trajectory enters $D_\xi$ unless it has previously crossed $D$; and, once at $D$, it is sent, by the impulsive semiflow $\psi$, to $I(D)$. Thus, no $\psi$-trajectory comes back to $D_\xi$ for $t>0$; this way we reach a contradiction.
\end{proof}

We remark that, as $D_\xi$ is an open set, the proof of the previous lemma also shows that $\Omega_\psi\subset X_\xi \cup D$.

Let us now exchange ergodic data between $\tilde\psi$ and $\psi$. Consider the continuous bimeasu\-rable bijection
$$
\begin{array}{cccc}
       f :&X_\xi & \longrightarrow &\pi(X_\xi) \\
         &x & \longmapsto & \pi(x)
        \end{array}$$
and the inclusion map ${i}:X_\xi \to X$. In the next two lemmas we follow the strategies used to prove \cite[Lemmas 5.2 \& 5.3]{AC14}.

\begin{lemma}\label{eq.conjpush}
$({i}\circ f^{-1})_*: \mathcal M_{\widetilde\psi}(\pi(X_\xi))  \longrightarrow \mathcal M_\psi(X_\xi)$ is well defined and is a bijection.
 \end{lemma}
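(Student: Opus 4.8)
The plan is to establish that the pushforward operator $(i \circ f^{-1})_*$ maps $\widetilde\psi$-invariant measures to $\psi$-invariant measures, that its image lies in $\mathcal{M}_\psi(X_\xi)$, and that it admits an inverse, namely the pushforward by $f \circ i^{-1} = f$ restricted appropriately. First I would recall that $f:X_\xi \to \pi(X_\xi)$ is a continuous bimeasurable bijection, so both $f_*$ and $(f^{-1})_*$ are well-defined maps on Borel probability measures and are mutually inverse; likewise $i:X_\xi \to X$ is the inclusion of a Borel subset, so $i_*$ simply regards a measure on $X_\xi$ as a measure on $X$ supported on $X_\xi$. The composition $(i\circ f^{-1})_*$ therefore sends a Borel probability measure $\nu$ on $\pi(X_\xi)$ to a Borel probability measure on $X$.

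\emph{Invariance and target space.} The key point is to check that if $\nu \in \mathcal{M}_{\widetilde\psi}(\pi(X_\xi))$ then $\mu := (i\circ f^{-1})_*\nu$ is $\psi$-invariant. For this I would use the semiconjugacy relation \eqref{eq.conjuga2}, namely $\tilde\psi_t\circ \pi\vert_{X_\xi} = \pi\circ \psi_t\vert_{X_\xi}$, which says $f\circ (\psi_t\vert_{X_\xi}) = \widetilde\psi_t\circ f$. Since $X_\xi$ is forward invariant under $\psi$, for any Borel set $B\subseteq X$ and any $t\ge 0$ one computes
$$
\mu(\psi_t^{-1}(B)) = \nu\big(f(\psi_t^{-1}(B)\cap X_\xi)\big) = \nu\big(\widetilde\psi_t^{-1}(f(B\cap X_\xi))\big) = \nu\big(f(B\cap X_\xi)\big) = \mu(B),
$$
where the third equality uses the $\widetilde\psi$-invariance of $\nu$. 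Thus $\mu\in \mathcal{M}_\psi(X)$, and since $\nu$ lives on $\pi(X_\xi)$ and $f^{-1}$ maps into $X_\xi$, the measure $\mu$ is concentrated on $X_\xi$, so $\mu\in \mathcal{M}_\psi(X_\xi)$ as claimed.

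\emph{Bijectivity.} To see the map is a bijection I would exhibit its inverse. Given $\mu\in\mathcal{M}_\psi(X_\xi)$, set $\nu := f_*\mu$; the same semiconjugacy computation run in reverse shows $\nu\in\mathcal{M}_{\widetilde\psi}(\pi(X_\xi))$, and $(i\circ f^{-1})_*(f_*\mu)=\mu$ while $f_*((i\circ f^{-1})_*\nu)=\nu$, because $f$ and $f^{-1}$ are genuine mutual inverses as measurable maps and $i$ acts as the identity on measures supported in $X_\xi$. I expect the only real subtlety—the step I would flag as the main obstacle—to be the careful bookkeeping of the sets in the invariance computation: one must ensure that intersecting with $X_\xi$ and applying $f$ commutes correctly with taking preimages under $\psi_t$ and $\widetilde\psi_t$, which rests precisely on the forward invariance of $X_\xi$ together with the bimeasurability of $f$. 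Once this identity is verified, both the well-definedness and the bijectivity follow directly.
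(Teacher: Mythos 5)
Your proof is correct, and its core mechanism is the same as the paper's: the conjugacy $f\circ\psi_t\vert_{X_\xi}=\tilde\psi_t\circ f$ transfers invariance in both directions, and the bimeasurable bijectivity of $f$ makes $f_*$ and $(f^{-1})_*$ mutual inverses. The only real divergence is the treatment of $i_*$. You read the codomain literally as (invariant) measures on the Borel sets of $X_\xi$, so $i_*$ is harmless and bijectivity is immediate once the $f$-part is settled; this does prove the statement as written. The paper instead factors $(i\circ f^{-1})_*=i_*\circ(f^{-1})_*$ and shows that $i_*:\mathcal M_\psi(X_\xi)\to\mathcal M_\psi(X)$ is itself a bijection: injectivity because $i$ admits a left inverse, and surjectivity because every $\psi$-invariant probability $\mu$ on $X$ has support in $\Omega_\psi\subseteq X_\xi\cup D$ and satisfies $\mu(D)=\mu(D_\xi)=0$ by Lemma~\ref{le.naoerrante}, so its restriction to $X_\xi$ is an invariant probability with $i_*\nu=\mu$. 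That surjectivity step is the only part of the argument with genuine dynamical content (Poincar\'e recurrence enters through Lemma~\ref{le.naoerrante}); your version does not need it, but it is exactly the fact invoked immediately after the lemma to replace $\mathcal M_\psi(X_\xi)$ by $\mathcal M_\psi(X)$ in the variational principle, so the paper's proof is deliberately establishing this slightly stronger bijection onto $\mathcal M_\psi(X)$ along the way.
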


\begin{proof}
To see that $ ({i}\circ f^{-1})_*$ is well defined, we need to check that if $\nu\in \mathcal M_{\widetilde\psi}(\pi(X_\xi))$, then one necessarily has $ ({i}\circ f^{-1})_*\nu\in M_\psi(X_\xi)$. Now, from
$$f\circ\psi_t={\widetilde\psi}_t\circ f,\quad\text{for all $t\ge0$},$$
 or equivalently
\begin{equation}\label{eq.menosum}
 \psi_t\circ f^{-1}=f^{-1}\circ{\widetilde\psi}_t,\quad\text{for all $t\ge0$}
\end{equation}
we clearly have that
$$\nu\in \mathcal M_{{\widetilde\psi}}(\pi(X_\xi)) \quad\Rightarrow\quad (f^{-1})_*\nu \in \mathcal M_{\psi}(X_\xi) \quad\Rightarrow\quad {i}_*(f^{-1})_*\nu\in \mathcal M_{\psi}(X). $$
Finally, as ${i}_*(f^{-1})_*\nu=({i}\circ f^{-1})_*\nu$, we conclude that $({i}\circ f^{-1})_*\nu\in \mathcal M_{\psi}(X)$. This shows that $ ({i}\circ f^{-1})_*$ is well defined.

It remains to check that $({i}\circ f^{-1})_*$ is bijective. As $({i}\circ f^{-1})_*={i}_*\circ f^{-1}_*$ and
$(f^{-1})_*$ is invertible, we only need to prove that ${i}_*$ is invertible. Clearly, being injective, ${i}$ has a left inverse; this implies that ${i}_*$ has a left inverse. Thus ${i}_*$ is  injective as well.

To prove that ${i}_*$ is surjective, given $\mu\in \mathcal M_{\psi}(X)$, let $\nu$ be the restriction of $\mu$ to the Borel subsets of $X_\xi$. Noticing that the support of $\mu$ is contained in $\Omega_\psi$, that $\Omega_\psi \subseteq X_\xi \cup D$ and that $\mu(D)=0$, we know that $\nu\in \mathcal M_{\psi}(X_\xi)$. Using the $\psi$-invariance of $\mu$, we also deduce that, for any Borel set $A\subset X_\xi$,
$$
\nu(\psi_t^{-1}(A))=\mu(\psi_t^{-1}(A)\cap(X_\xi))=\mu(\psi_t^{-1}(A))=\mu(A)=\mu(A\cap(X_\xi))=\nu(A).
$$
Moreover, for  any Borel set $A\subset X$, we have
$$
{i}_*\nu(A)=\nu({i}^{-1}(A))=\nu(A\cap(X_\xi))=\mu(A\cap(X_\xi))=\mu(A).
$$
Consequently, $\nu\in \mathcal M_{\psi}(X_\xi)$ and ${i}_*\nu=\mu$.
\end{proof}

So, Lemma~\ref{eq.conjpush} ensures that
$$h_{\rm{top}}(\tilde\psi) = \sup\,\{h_\mu(\psi_1): \mu \in \mathcal{M}_{\psi}(X_\xi)\}.$$
Besides, from Lemma~\ref{le.naoerrante} we get
$$\sup\,\{h_\mu(\psi_1): \mu \in \mathcal{M}_{\psi}(X_\xi)\} = \sup\,\{h_\mu(\psi_1): \mu \in \mathcal{M}_{\psi}(X)\}.$$
Hence,
$$h^{\tau}_{\rm{top}}(\psi)=\sup\,\{h_\mu(\psi_1):\,\mu \in \mathcal{M}_{\psi}(X)\}.$$

\bigskip

\section{An example}

Consider the phase space
$$X=\left\{(r\cos\theta,r\sin\theta)\in\R^2:  1\le r\le 2  , \,\theta \in [0,2\pi]\right\}$$
and define $\varphi:\R^+_0\times X\to X$ as the semiflow of the vector field in $X$ given by
$$
   \begin{cases}
   r'=0 & \\
   \theta'=1 .&
   \end{cases}
$$
The trajectories of $\varphi$ are circles spinning counterclockwise around zero. Take now
$$D=\{(r,0)\in X: 1\le r \le 2\}$$
and the map
$$I: (r,0) \in D \quad \mapsto \quad I(r,0)=\left(-\frac12-\frac12\,r, 0\right)$$
whose Lipschitz constant is $1/2$.

The non-wandering set of the semiflow $\psi$ of the impulsive dynamical system $(X,\varphi, D, I)$ is
$$\Omega_\psi=\left\{(\cos\theta, \sin\theta): \pi\le \theta\le 2\pi\right \}$$
and, by \cite[Theorem A]{AC14}, $\psi$ has some invariant probability measure. For a suitably small $\xi>0$,
\begin{eqnarray*}
D_\xi &=& \left\{(r\cos\theta, r\sin\theta): 1\le r\le 2, 0 < \theta< 2\pi\xi \right\} \\
X_\xi &=& \left\{(r\cos\theta,r\sin\theta)\in\R^2:  1\le r\le 2, \,\theta \in [2\pi\xi, 2\pi) \right\}.
\end{eqnarray*}

As regards the requirements of Theorem~\ref{teo.var.principle}, we have, $I(D)\cap D=\emptyset$; the set $D$ satisfies a half-tube condition (with $0<\xi_0\leq 1/2$); the function $\tau^*:X_\xi \cup D \to [0,2\pi]$ is given by
$$
\tau^*(x)=
\begin{cases}
2\pi - \theta, &\text{if $x=(r\cos\theta, r\sin\theta) \in X_\xi$};\\
0, &\text{if $x\in D$}
\end{cases}
$$
so it is continuous; whenever $\psi_t(x) \in I(D)$, we have $\{\psi_{t+s}(x): 0< s < \pi\}\cap I(D)=\emptyset;$ and, finally, $I(\Omega_\psi\cap D)=I(\{(-1,0), (1,0)\})=\{(-1,0)\} \subset \Omega_\psi \setminus D$.

The equality (\ref{eq.conjuga2}) and Proposition~\ref{prop.continuity} build a conjugacy between $\psi_{|_{\Omega_\psi \setminus D}}$ and the semiflow $\tilde\psi$ on the quotient structure where it acts as a circle rotation. Therefore
$$h^\tau_{\rm{top}}(\psi)=\sup\,\{h_\mu(\psi_1):\,\mu \in \mathcal{M}_{\psi}(X)\}=0.$$

\vspace{1cm}

\end{document}